\let\cal\mathscr
\newcommand \Om {\Omega}
\newcommand \om {\omega}
\newcommand{\KK}[2]{\K\big[#1,#2\big]}
\newcommand{\KKK}[3]{\K\big[#1,\K\big[#2,#3\big]\big]}
\newcommand \Spin {\Lambda( T^{*(0,1)}X)}
\DeclareMathOperator{\End}{End}
\DeclareMathOperator{\Ker}{Ker}
\DeclareMathOperator{\rk}{rk}
\DeclareMathOperator{\Cl}{Cl}
\DeclareMathOperator{\Spec}{Spec}
\newcommand \dbar {\overline{\partial}}
\newcommand \del[1] {\frac{\partial}{\partial #1}}
\newcommand \< {\mathcal{h}}
\renewcommand \> {\mathcal{i}}
\newcommand \cinf {\CC^\infty}
\renewcommand \epsilon {\varepsilon}
\newcommand \CC {{\cal C}}
\newcommand \K {{\mathcal K}}
\newcommand \LL {{\cal L}}
\newcommand \OO {\mathcal{O}}
\newcommand \PP {{\cal P}}
\newcommand \Q {{\mathcal Q}}
\newcommand \ID {I_{\C\otimes E}}
\newcommand \J[1] {J_{#1,x_0}}
\newcommand \QQ[1] {\Q_{#1,x_0}}
\newcommand \KKKK[4]{\KK{\KK{#1}{#2}}{\KK{#3}{#4}}}
\newcommand \KKk[3]{\K\big[\K\big[#1,#2\big],#3\big]}
\newcommand \R {\mathbb R}
\newcommand \C {\mathbb C}
\newcommand \N {\mathbb N}
\newcommand \Z {\mathbb Z}
\newcommand \ignore[1] {}
\newtheorem{prop}{Proposition}[section]
\newtheorem{theo}[prop]{Theorem}
\newtheorem{lem}[prop]{Lemma}
\theoremstyle{definition}
\newtheorem*{ackn*}{Acknowledgements}
\theoremstyle{remark}
\numberwithin{equation}{section}
\begin{document}

\title{\bf{On the composition of Berezin-Toeplitz operators on symplectic manifolds}}
\author{Louis IOOS}
\date{16 mars 2017}

\maketitle

\begin{abstract}

We compute the second coefficient of the composition of two Berezin-Toeplitz operators associated with the $\text{spin}^c$ Dirac operator on a symplectic manifold, making use of the full off-diagonal expansion of the Bergman kernel.

\end{abstract}

\section{Introduction}

In \cite{MM08b}, Ma and Marinescu studied in detail Berezin-Toeplitz quantization for symplectic manifolds, introducing kernel calculus as a method to compute the coefficients of the asymptotic expansion of the associated Toeplitz operators. They considered the following situation: let $(X,\om)$ be a compact symplectic manifold of dimension $2n$, and $(E,h^E),\ (L,h^L)$ be Hermitian vector bundles on $X$ with $\rk(L)=1$, endowed with Hermitian connections $\nabla^E,\ \nabla^L$. If $R^L$ denotes the curvature of $\nabla^L$, we assume the following so-called \emph{prequantization condition}:
\begin{equation}
\label{prequantization}
\om=\frac{\sqrt{-1}}{2\pi}R^L.
\end{equation}

Let $J\in\End(TX)$ be an almost complex structure on $TX$ compatible with $\om$, and let $g^{TX}$ be the Riemannian metric on $TX$ defined by
\begin{equation}
\label{Jgras}
\om(u,v)=g^{TX}(Ju,v),
\end{equation}
for any $u ,v\in TX$. We denote by $L^p$ the $p^{th}$ tensor power of $L$ and $D_p$ the \emph{$spin^c$ Dirac operator} acting on the smooth sections of $E_p:=\Spin\otimes L^p\otimes E$. The metrics $g^{TX}, h^L$ and $h^E$ induce the usual $L^2$-scalar product on the space $L^2(X,E_p)$ of the square integrable sections of $E_p$. The orthogonal projection of $L^2(X,E_p)$ on $\Ker(D_p)$ with respect to this product is denoted by $P_p$ and is called the \emph{Bergman projection}. For $f\in\cinf(X,\End(E))$, the \emph{Berezin-Toeplitz quantization} of $f$ is the family $\{T_{f,p}\}_{p\in\N}$ of operators acting on $L^2(X,E_p)$ by
\begin{equation}
\label{Berezin-Toeplitz quantization}
T_{f,p}=P_pfP_p:L^2(X,E_p)\rightarrow L^2(X,E_p),
\end{equation}
where $f$ denotes the operator acting by pointwise multiplication by $f$. 

More generally, a family $\{T_p\}_{p\in\N}$ of bounded operators acting on $L^2(X,E_p)$ is called a \emph{Toeplitz operator} if $P_pT_pP_p=T_p$ for all $p\in\N$, and if there exists a sequence of sections $g_r\in\cinf(X,\End(E))$ for all $r\in\N$ such that
\begin{equation}\label{deftoep}
T_p=\sum_{r=0}^\infty T_{g_r,p} p^{-r} + O(p^{-\infty}),
\end{equation}
where, for all $r\in\N$, the family of operators $\{T_{g_r,p}\}_{p\in\N}$ is the Berezin-Toeplitz quantization of $g_r$ in the sense of \eqref{Berezin-Toeplitz quantization}. Here the notation $O(p^{-\infty})$ means that, for all $k\in\N$, the sum up to order $k$ is a $O(p^{-k})$ of the left member for the operator norm.

In \cite[Th.1.1]{MM08b}, Ma and Marinescu proved that the set of Toeplitz operators as defined in \eqref{deftoep} forms an algebra. More precisely, given $f, g\in\cinf(X,\End(E))$, they established that
\begin{equation}
\label{compo}
T_{f,p}T_{g,p}=\sum\limits_{r=0}^\infty p^{-r}T_{C_r(f,g),p} + O(p^{-\infty}),
\end{equation}
where $C_r$ are bidifferential operators, with $C_0(f,g)=fg$. In particular, we get the following formula:
\begin{equation}\label{Toeplitzfle1}
T_{f,p}T_{g,p}=T_{fg,p}+O(p^{-1}),
\end{equation}
which shows that the composition of two Toeplitz operators approach the usual pointwise composition of endomorphisms in the semi-classical limit, when $p$ tends to $\infty$. Moreover, in the case $f, g\in\cinf(X)$, they showed that $C_1(f,g)-C_1(g,f)=\sqrt{-1}\{f,g\}$, where $\{.,.\}$ denotes the Poisson bracket associated to the symplectic form $2\pi\om$. We thus get the following formula:
\begin{equation}\label{Toeplitzfle2}
[T_{f,p},T_{g,p}]=p^{-1}T_{\{f,g\},p}+O(p^{-2}),
\end{equation}
which shows that the family $\{T_{f,p}\}_{p\in\N}$ indeed satisfies the expected semi-classical limit for a quantization.

Especially interesting is the case of $J$ coming from a complex structure, making $X$ into a Kähler manifold. In this case, we ask $(E,h^E), (L,h^L)$ to be holomorphic Hermitian vector bundles, and $\nabla^E, \nabla^L$ to be the associated holomorphic Hermitian connections. The $spin^c$ Dirac operator $D_p$ is then given by
\begin{equation}
D_p=\sqrt{2}(\dbar^{L^p\otimes E} +\dbar^{L^p\otimes E,*}),
\end{equation}
where $\dbar^{L^p\otimes E}$ denotes the holomorphic $\dbar$-operator on $L^p\otimes E$ acting on the Dolbeault complex $\oplus_q\Omega^{0,q}(X,L^p\otimes E)=\cinf(X,E_p)$, and $\dbar^{L^p\otimes E,*}$ its formal adjoint for the $L^2$-scalar product. By Hodge theory, we get
\begin{equation}\label{Hodge}
\Ker(D_p|_{\Om^{0,q}(X,L^p\otimes E)})\simeq H^q(X,L^p\otimes E),
\end{equation}
where $H^q(X,L^p\otimes E)$ denotes the $q^{th}$ Dolbeault cohomology group associated to $L^p\otimes E$. The prequantization condition \eqref{prequantization} implying $L$ positive, by the Kodaira-Serre vanishing theorem we get for any $q>0$,
\begin{equation}
\label{Kodaira-Serre}
H^q(X,L^p\otimes E)=0,
\end{equation}
whenever $p$ is sufficiently large. Picking such a $p$, the identification \eqref{Hodge} together with \eqref{Kodaira-Serre} imply $\Ker(D_p)\simeq H^0(X,L^p\otimes E)$, which gives back the usual setting of geometric quantization on Kähler manifolds, the space $H^0(X,L^p\otimes E)$ being the space of holomorphic sections of $L^p\otimes E$. In the general symplectic setting however, Dolbeault cohomology doesn't exist, and $\Ker(D_p)$ is then a natural generalization of the space $H^0(X,L^p\otimes E)$.

The theory of Berezin-Toeplitz quantization in the Kähler case for $E=\C$ has first been developed by Bordemann, Meinreken and Schlichenmaier in \citep{BMS94} and Schlichenmaier in \citep{Sch00}. Their approach is based on the work of Boutet de Monvel and Sjöstrand on the Szegö kernel in \citep{BS75}, and the theory of Toeplitz structures developed by Boutet de Monvel and Guillemin in \citep{BG81} (see also \citep{Cha03}).

In the Kähler case, the data given in \eqref{Berezin-Toeplitz quantization} can be computed much more explicitly, and in \citep[Th.0.3]{MM12}, Ma and Marinescu gave the following formula for the second coefficient $C_1(f,g)$ for $f,g\in\cinf(X,\End(E))$:
\begin{equation}\label{fleC_1(f,g)}
C_1(f,g)=-\frac{1}{2\pi}\langle\nabla^{1,0}f,\nabla^{0,1}g\rangle,
\end{equation}
where $\nabla^{1,0}$ and $\nabla^{0,1}$ denote the holomorphic and anti-holomorphic part of the connection on $\End(E)$ induced by $\nabla^E$, and $\<.,.\>$ denotes the pairing induced by $g^{TX}$ on $T^*X\otimes \End(E)$ with values in $\End(E)$. The formula \eqref{fleC_1(f,g)} is compatible with the following description of the Poisson bracket in the case $E=\C$:
\begin{equation}\label{explicit Poisson}
\sqrt{-1}\{f,g\}=-\frac{1}{2\pi}\left(\langle\nabla^{1,0}f,\nabla^{0,1}g\rangle-\langle\nabla^{1,0}g,\nabla^{0,1}f\rangle\right).
\end{equation}

Ma and Marinescu also computed the coefficient $C_2(f,g)$ for $f, g\in\cinf(X)$, and gave in \citep[Th.0.3]{MM12} formulas for the first coefficients of the expansion in $p\in\N$ of the kernel of a Berezin-Toeplitz operator on the diagonal. These formulas have been used for the study of canonical metrics via balanced embeddings by Fine in \citep[Th.10]{Fin12} for the quantization of the Lichnerowicz operator, and then by Keller, Meyer and Seyyedali in  \citep[Prop.3.6]{KMS16} for the quantization of the Laplacian operator on vector bundles (see also \citep{DV15} on this last topic). In \citep[Th.1.4, Th.1.5]{Hsi12}, Hsiao gave a new proof of \citep[Th.0.2, Th.0.3]{MM12} for $E=\C$ using results from microlocal analysis of \citep{BS75}.

In the context of deformation quantization, the properties \eqref{Toeplitzfle1} and \eqref{Toeplitzfle2} in the case $E=\C$ imply that the expansion \eqref{compo} defines a star product on $\cinf(X)$, called the \emph{Berezin-Toeplitz star product} (see for instance \citep[Rem.7.4.2]{MM07} and \citep {Xu13}).

In this paper, we use methods developed in \cite{MM08b} as well as results of \cite{MM06} in order to compute $C_1(f,g)$ for $f,g\in\cinf(X,\End(E))$ in the general symplectic case described in the beginning of this section. Analogous to \eqref{fleC_1(f,g)}, our result is:

\begin{theo}\label{goal}
Let $(X,\om)$ be a compact symplectic manifold equiped with a Hermitian line bundle with Hermitian connection $(L,h^L,\nabla^L)$ satisfying the prequantization condition \eqref{prequantization}, and let $(E,h^E,\nabla^E)$ be a Hermitian vector bundle with Hermitian connection. Let $J\in\End(TX)$ be an almost complex structure on $X$, and $g^{TX}$ be the Riemannian metric defined by \eqref{Jgras}. For any $f,g\in\cinf(X,\End(E))$, the second coefficient of the asymptotic expansion of $T_{f,p}T_{g,p}$ as in \eqref{compo} is 
\begin{equation}
C_1(f,g)=-\frac{1}{2\pi}\langle\nabla^{1,0}f,\nabla^{0,1}g\rangle,
\end{equation}
where $\nabla^{1,0}$ and $\nabla^{0,1}$ denote the holomorphic and anti-holomorphic part of the connection on $\End(E)$ induced by $\nabla^E$, and $\<.,.\>$ denotes the pairing induced by $g^{TX}$ on $T^*X\otimes \End(E)$ with values in $\End(E)$.
\end{theo}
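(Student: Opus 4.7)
The plan is to adapt the kernel calculus of \cite{MM08b}, using as essential input the full off-diagonal expansion of the Bergman kernel established in \cite{MM06}. Fix an arbitrary $x_0\in X$ and work in Ma-Marinescu normal coordinates near $x_0$, identified via the exponential map with a small ball in $T_{x_0}X\cong\C^n$; trivialize $L$ and $E$ by parallel transport along radial geodesics. After rescaling by $Z\mapsto Z/\sqrt{p}$, the Bergman kernel has an asymptotic expansion of the form
\[
p^n P_\infty(Z,Z')\Bigl(1+\sum_{r\ge 1}p^{-r/2}F_r(Z,Z')\Bigr),
\]
where $P_\infty$ is the Bergman kernel of a model Dirac operator on $\C^n$ (a Bargmann-Fock-type Gaussian attached to $R^L_{x_0}$) and the $F_r$ are explicit polynomials in $(Z,Z')$ with values in $\End(E_{x_0})$ determined by the geometric data at $x_0$.

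The main computation is to write the composition kernel
\[
(T_{f,p}T_{g,p})(x,y)=\iint P_p(x,w_1)\,f(w_1)\,P_p(w_1,w_2)\,g(w_2)\,P_p(w_2,y)\,dw_1\,dw_2,
\]
re-express it in rescaled coordinates, Taylor-expand $f$ and $g$ around $x_0$, and substitute the asymptotic expansion of $P_p$. Each power of $p^{-1/2}$ then reduces to a finite sum of Gaussian integrals over $\C^n\times\C^n$ of polynomials against products of three copies of $P_\infty$, and the kernel calculus of \cite{MM08b} identifies such sums with the kernels of Toeplitz operators. This allows one to read off $C_r(f,g)(x_0)$ from the order $p^{-r}$ term at $Z=Z'=0$ after projection onto the range of $P_\infty$.

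To obtain $C_1(f,g)$ one collects every contribution of order $p^{-1}$. These come from four independent mechanisms: (i) using the leading term $P_\infty$ in all three Bergman kernels and picking up one first derivative of $f$ paired with one first derivative of $g$; (ii) one $F_2$ correction with $f,g$ frozen at $x_0$; (iii) two $F_1$ corrections with $f,g$ frozen at $x_0$; (iv) one $F_1$ correction combined with one first derivative of $f$ or $g$. In the K\"ahler case of \cite{MM12}, the $F_1$ vanish and mechanism (i) alone yields formula \eqref{fleC_1(f,g)}.

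The main obstacle is to show that in the general symplectic case mechanisms (ii), (iii), (iv) do not introduce new bidifferential terms but are instead absorbed into the order $p^{-1}$ part of the single Toeplitz operator $T_{fg,p}$, and hence contribute only to $C_0(f,g)=fg$. The key point is that the polynomials $F_1, F_2$ simultaneously govern the single-operator expansion of any $T_{h,p}$: when $f,g$ are frozen at $x_0$, the ``purely geometric'' contributions to $T_{f,p}T_{g,p}$ must agree with the analogous contributions to $T_{f(x_0)g(x_0),p}$. Using the explicit formulas for $F_1, F_2$ from \cite{MM06} together with parity of Gaussian moments against $P_\infty$ yields the required matching. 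What survives is mechanism (i), which reduces to a standard Bargmann-Fock Gaussian integral between $\nabla^{1,0}f$ and $\nabla^{0,1}g$ and produces the factor $-1/(2\pi)$ from the normalization imposed by \eqref{prequantization}. Since $x_0$ was arbitrary, this determines $C_1(f,g)$ globally.
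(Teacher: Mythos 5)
Your plan follows the same route as the paper: the same normal coordinates and trivialization by parallel transport, the same rescaled off-diagonal expansion of $P_p$, Taylor expansion of $f,g$, and reduction to the kernel calculus $\KK{\cdot}{\cdot}$ of \cite{MM08b}; your mechanisms (i)--(iv) correspond exactly to the paper's decomposition $\QQ{2}(f,g)(0,0)-\QQ{2}(fg)(0,0)=I_1+I_2+I_3+I_4$ of \eqref{QQ2(f,g)-QQ2(fg)}. Your treatment of (ii) and (iii) is correct: with $f,g$ frozen at $x_0$ these terms commute with $f(x_0),g(x_0)$ and cancel against the corresponding purely geometric terms of $\QQ{2}(fg)$ (this uses $P\OO_1P=0$ and the operator identities \eqref{PO1L2O1P}--\eqref{-PO1L2O1P}).

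The gap is in mechanism (iv), i.e.\ the terms pairing one $F_1=\J{1}$ with one first derivative of $f$ or $g$ (the paper's $I_2,I_3,I_4$). These are \emph{not} automatically ``absorbed into $T_{fg,p}$'': the coefficients $f(x_0)\partial g$, $(\partial f)g(x_0)$ and $-\partial(fg)$ multiply brackets such as $\KK{\ID}{Z_i\J{1}}$, $\KKk{\ID}{Z_i\ID}{\J{1}}$, $\KKK{\ID}{\ID}{Z_i\J{1}}$ in which $Z_i$ and $\J{1}$ occupy different slots, so the Leibniz rule produces a cancellation only after one proves these placements all give the same value at $Z=Z'=0$; and for $I_2$ no Leibniz combination is available at all, so the individual kernels must vanish. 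Parity of Gaussian moments does not deliver this: $\J{1}(0,Z'')$ is an odd polynomial in $Z''$, so an integral like $\int\J{1}(0,Z'')\PP(0,Z'')Z_i''\PP(Z'',0)\,dZ''$ is a Gaussian moment of an \emph{even}-degree polynomial and has no symmetry reason to vanish. What actually kills these terms is the spinor structure of $\OO_1$: by (2.33)--(2.34) of \cite{MM06} (the paper's \ref{L}), the kernels of $P\OO_1(\LL^0_2)^{-1}P^{\perp}$ and $(\LL^0_2)^{-1}P^{\perp}\OO_1P$ carry factors $i_{\del{\bar z_m}}i_{\del{\bar z_l}}$ and $d\bar z_ld\bar z_m$, which are annihilated by (resp.\ annihilate) the degree-zero projection $\ID$, together with linear prefactors vanishing at the origin. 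Relatedly, your $F_r$ must take values in $\End(\Spin\otimes E)_{x_0}$, not $\End(E_{x_0})$ as written: the nonzero spinor degrees of $\J{1}$ are precisely what makes this step work, and suppressing them hides the needed argument. A smaller caveat on (i): the direct Gaussian pairing of first derivatives yields $\tfrac{1}{\pi}\sum_i\partial_{\bar z_i}f\,\partial_{z_i}g$ (only $\KK{\bar z_i'\ID}{z_j\ID}$ contributes at the origin), and the stated answer $-\tfrac{1}{2\pi}\langle\nabla^{1,0}f,\nabla^{0,1}g\rangle$ emerges only after combining this with the second-order Taylor terms of $f$, $g$ and $fg$ via Leibniz; done naively, (i) alone gives the wrong pairing and sign.
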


In the case $E=\C$, Charles treated the theory of Berezin-Toeplitz quantization for symplectic manifolds in \citep{Cha16a} using microlocal analysis of \citep{BG81}, and computed the coefficient $C_1(f,g)$ for $f, g\in\cinf(X)$ in \citep{CL}.

\begin{ackn*}
This paper is a part of the author's PhD thesis under the supervision of Pr. Xiaonan Ma. The author wants to thank his supervisor for his support and for the numerous advices on the redaction of this paper. This work was supported by grants from Région ile-de-France.
\end{ackn*}

\section{The local model for Toeplitz operators}\label{local model}

This section is dedicated to set the context and the notations, and to describe the local model which will be used in the next section for the computations.

\subsection{Setting}

Let $(X,\om)$ be a compact symplectic manifold, endowed with an almost complex structure $J$ compatible with $\om$ on its tangent bundle $TX$. We denote by $g^{TX}$ the Riemannian metric defined by \eqref{Jgras}, and by $\nabla^{TX}$ the associated Levi-Civita connection on $TX$. The almost complex structure $J$ induces a splitting
\begin{equation}\label{splitJ}
TX\otimes\C=T^{(1,0)}X\oplus T^{(0,1)}X
\end{equation}
on the complexification of the tangent bundle into the eigenspaces of $J$ correponding to the eigenvalues $\sqrt{-1}$ and $-\sqrt{-1}$ respectively. This allows us to define the total exterior product bundle $\Spin$, which is actually a Clifford bundle: for any $v\in TX$ with decomposition $v=v_{1,0}+v_{0,1}$ according to \eqref{splitJ}, we define the Clifford action of $v$ on $\Spin$ by
\begin{equation}
c(v)=\sqrt{2}(v^*_{1,0}-i_{v_{0,1}}),
\end{equation}
where $v^*_{1,0}$ denotes the wedge product by the metric dual of $v_{1,0}$ in $T^{*(0,1)}X$, and $i_{v_{0,1}}$ denotes the contraction by $v_{0,1}\in T^{(0,1)}X$. Let $\nabla^{\det}$ be the connection on $\det(T^{(1,0)}X)$ induced by the natural projection of $\nabla^{TX}$ on $T^{(1,0)}X$ via the decomposition \eqref{splitJ}. We denote by $\nabla^{\Cl}$ the Clifford connection on $\Spin$  induced by $\nabla^{TX}$ and $\nabla^{\det}$ as defined in \cite[\S 1.3.1]{MM07}.
%We define :
%\begin{equation}\label{hermconnection}
%\begin{split}
%\nabla^{T^{(1,0)}X}=\frac{1}{2}(1-\sqrt{-1}J)\nabla^{TX}\frac{1}{2}(1-\sqrt{-1}J)\\
%\nabla^{T^{(0,1)}X}=\frac{1}{2}(1+\sqrt{-1}J)\nabla^{TX}\frac{1}{2}(1+\sqrt{-1}J)
%\end{split}
%\end{equation}
%
%As $\frac{1}{2}(1-\sqrt{-1}J)$ and $\frac{1}{2}(1+\sqrt{-1}J)$ are the natural projections from $TX\otimes\C$ to $T^{(1,0)}X$ and $T^{(0,1)}X$ respectively, the equations \eqref{hermconnection} induce connections on these bundles, which are Hermitian with respect to the natural Hermitian product induced by $g^{TX}$. The connection $\nabla^{T^{(1,0)}X}$ induces naturally a connection $\nabla^{\Spin}$ on the total external product bundle $\Spin$. 

Recall now that $\nabla^E$ and $\nabla^L$ are Hermitian connections on the Hermitian vector bundle $(E,h^E)$ and the Hermitian line bundle $(L,h^L)$ respectively. The vector bundle
\begin{equation}
E_p=L^p\otimes\Spin\otimes E
\end{equation}
is naturally endowed with the Hermitian product induced by $g^{TX}, h^L$ and $h^E$. Then there is a natural $L^2$-scalar product on $\cinf(X,E_p)$ induced by the Hermitian product of $E_p$ and the Riemannian volume form $dv_X$ on $X$ associated to $g^{TX}$. We denote by $\nabla^{E_p}$ the connection on $E_p$ induced by $\nabla^{\Cl}, \nabla^L$ and $\nabla^E$. We define then the $spin^c$ Dirac operator $D_p$ locally by
\begin{equation}\label{spin^c}
D_p=\sum\limits_{j=1}^{2n} c(e_j)\nabla^{E_p}_{e_j}:\cinf(X,E_p)\rightarrow\cinf(X,E_p),
\end{equation}
where $\{e_j\}_{j=1}^{2n}$ is any local orthonormal frame of $TX$ with respect to $g^{TX}$. Then $D_p$ is a formally self-adjoint operator on $\cinf(X,E_p)$ with respect to the $L^2$-scalar product.

\subsection{Model operator}

Let us fix a point $x_0\in X$, and let us choose $\epsilon_0>0$ so that the exponential map at $x_0$ induces a diffeomorphism between the geodesic ball $B^X(x_0,\epsilon_0)\subset X$ and the open ball $B(0,\epsilon_0)\subset T_{x_0}X$, where $T_{x_0}X$ is endowed with the Euclidean metric induced by $g^{TX}$. We trivialize $L, E$ and $E_p$ over $B(0,\epsilon_0)$ by identification with their respective fibre at $x_0$ through parallel transport with respect to their respective connections along geodesics. We then identify $L_{x_0}$ with $\C$ choosing a unit vector. Let us note that as $\End(L^p_{x_0})$ is canonically identified with $\C$, our results will not depend on this choice. 

Let $\{w_j\}_{j=1}^n$ be an orthonormal basis of $T_{x_0}^{(1,0)}X$ with respect to the Hermitian product induced by $g^{TX}$. This induces a basis $\{e_j\}_{j=1}^{2n}$ of $T_{x_0}X$, orthonormal with respect to the Euclidean product induced by $g^{TX}$, such that
\begin{equation}
\label{base}
e_{2j-1}=\frac{1}{\sqrt{2}}(w_j+\overline{w}_j)\ \text{ and }\ e_{2j}=\frac{\sqrt{-1}}{\sqrt{2}}(w_j-\overline{w}_j),
\end{equation}
for any $1\leq j\leq n$. We use this basis to identify $T_{x_0}X$ with $\R^{2n}$. We denote by $Z=(Z_1,\dots,Z_{2n})$ the induced real coordinates, and by $z=(z_1,\dots,z_n)$ the complex coordinates on $\C^n$ such that $z_i=Z_{2i-1}+\sqrt{-1}Z_{2i}$ for any $1\leq i \leq n$. We thus have $e_j=\partial/\partial Z_j$ for any $1\leq j\leq 2n$, and the following equality of vector fields holds:
\begin{equation}\label{Z=z+zbar}
\sum_{i=1}^{2n}Z_i\frac{\partial}{\partial Z_i}=\sum_{j=1}^{2n} \left(z_j\frac{\partial}{\partial z_j}+\bar{z}_j\frac{\partial}{\partial \bar{z}_j}\right),
\end{equation}
which is not to be confused with the equality of coordinates $Z=(z+\bar{z})/2$. We write $dZ$ for the canonical Lebesgue measure of $\R^{2n}$ with respect to the variable $Z$.

Let us now consider the Hilbert space $L^2(\R^{2n},(\Spin\otimes E)_{x_0})$ with the $L^2$-scalar product induced by the Hermitian product on $(\Spin\otimes E)_{x_0}$ and the Lebesgue measure of $\R^{2n}$. By the identification $T_{x_0}X\cong\R^{2n}$ and the trivializations above, we identify sections in $L^2(X,E_p)$ with sufficiently small compact support around $x_0$ with functions in $L^2(\R^{2n},(\Spin\otimes E)_{x_0})$. 

It is shown in the complex case in \cite[Chap.4]{MM07} and generalized to the symplectic case in \cite[Chap.8]{MM07} how the operator $(D_p)^2$ is approximated for large $p$, after a convenient rescaling in $\sqrt{p}$, by an operator $\LL^0_2$ acting on $\cinf(\R^{2n},(\Spin\otimes E)_{x_0})$ defined by the formulas
\begin{equation}
\label{model operator 1}
\begin{split}
\LL^0_2=\LL+4\pi\overline{w}^j i_{\overline{w}_j}, \quad & \quad \LL=\sum\limits_{j=1}^{n}b_jb_j^+,\\
 & \\
b_i=-2\frac{\partial}{\partial z_i}+\pi \bar{z}_i, \quad & \quad b_i^+=2 \frac{\partial}{\partial{\bar{z}_i}}+\pi z_i,
\end{split}
\end{equation}
for any $1\leq i\leq n$, where $z_i$ and $\bar{z}_i$ denote the scalar multiplication on $(\Spin\otimes E)_{x_0}$ by $z_i$ and $\bar{z}_i$ respectively. The differential operator $\LL$ acts on the scalar part of smooth functions with values in $(\Spin\otimes E)_{x_0}$, and can thus be seen as a differential operator on $\cinf(\R^{2n})$, still denoted by $\LL$. We call $\LL$ the \emph{model operator}. It is a densely defined self-adjoint operator on $L^2(\R^{2n})$ and has the following spectral properties:
\begin{prop}{(\cite[\S 4.1.20]{MM07})}\label{spectrumL}
The spectrum of $\LL$ on $L^2(\R^{2n})$ is given by
\begin{equation}\label{specalpha}
\Spec(\LL)=\Big\{4\pi\sum\limits_{i=1}^n \alpha_i\ \Big|\ \alpha=(\alpha_1,\dots,\alpha_n)\in\N^n\Big\},
\end{equation}
and an orthogonal basis of the eigenspace indexed by $\alpha\in\N^n$ as in \eqref{specalpha} is given by
\begin{equation}
b^\alpha\left(z^\beta \exp\left(-\frac{\pi}{2}\sum\limits_{i=1}^n |z_i|^2\right)\right)\ \text{for any}\ \beta\in\N^n.
\end{equation}
\end{prop}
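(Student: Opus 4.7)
The plan is to derive the spectral decomposition of $\LL$ purely from the canonical commutation relations of the first-order operators $b_j$ and $b_j^+$, viewing $\LL$ as a rescaled multidimensional harmonic oscillator. A direct computation from the definitions in \eqref{model operator 1} gives
\begin{equation*}
[b_i, b_j] = [b_i^+, b_j^+] = 0, \qquad [b_i, b_j^+] = -4\pi\,\delta_{ij},
\end{equation*}
and integration by parts against $dZ$ shows that $b_j^+$ is the formal adjoint of $b_j$ on $\cinf_c(\R^{2n})$. Hence $\LL = \sum_j b_j b_j^+$ is nonnegative symmetric and has a self-adjoint Friedrichs extension. The above commutation relations then yield $[\LL, b_j^+] = -4\pi\, b_j^+$ and $[\LL, b_j] = 4\pi\, b_j$, so $b_j^+$ acts as a lowering and $b_j$ as a raising operator, each shifting eigenvalues by $4\pi$.

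Next I would identify $\Ker(\LL)$. From $\la \LL\phi, \phi\ra = \sum_j \norme{b_j^+\phi}^2$, one has $\LL\phi = 0$ if and only if $b_j^+\phi = 0$ for every $j$, i.e.\ $2\,\partial\phi/\partial\bar z_j + \pi z_j \phi = 0$ for $1\leq j\leq n$. The general solution is $\phi(z) = f(z)\exp(-\pi|z|^2/2)$ with $f$ entire holomorphic on $\C^n$, so the $L^2$-part of $\Ker(\LL)$ is the Bargmann--Fock space, of which $\{z^\beta \exp(-\pi|z|^2/2) : \beta \in \N^n\}$ is an orthogonal basis; orthogonality follows because the angular integrals of $z_j^{\beta_j}\bar z_j^{\gamma_j} e^{-\pi|z_j|^2}$ vanish unless $\beta_j = \gamma_j$.

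Applying the raising operators then produces the candidate eigenvectors $\psi_{\alpha,\beta} := b^\alpha\bigl(z^\beta \exp(-\pi|z|^2/2)\bigr)$. Since the $b_i$ commute, an easy induction based on $[b_j^+, b_i] = 4\pi\,\delta_{ij}$ gives $[b_j^+, b^\alpha] = 4\pi\,\alpha_j\, b^{\alpha - e_j}$, hence $b_j^+\psi_{\alpha,\beta} = 4\pi\,\alpha_j\,\psi_{\alpha - e_j,\beta}$ and, summing $b_j b_j^+$ over $j$, $\LL\,\psi_{\alpha,\beta} = 4\pi|\alpha|\,\psi_{\alpha,\beta}$ with $|\alpha| = \sum_i \alpha_i$. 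Conversely, since $b_j^+$ lowers eigenvalues by $4\pi$ and $\LL \geq 0$, iterating the $b_j^+$ on any eigenvector must eventually terminate in $\Ker(\LL)$, which forces $\Spec(\LL) \subset 4\pi\,\N$. Orthogonality of $\{\psi_{\alpha,\beta}\}_\beta$ at fixed $\alpha$ reduces via the commutation relations to orthogonality of $\{z^\beta \exp(-\pi|z|^2/2)\}_\beta$ up to a nonzero combinatorial constant.

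The main obstacle is analytic rather than algebraic: one must check that the ladder construction actually exhausts $L^2(\R^{2n})$, i.e.\ that $\Spec(\LL)$ has no continuous part and that the $\psi_{\alpha,\beta}$ really span. This is handled by observing that each $\psi_{\alpha,\beta}$ equals a Hermite-type polynomial in $(z,\bar z)$ times $\exp(-\pi|z|^2/2)$; up to normalization, this is the standard Hermite basis of $L^2(\R^{2n})$, whose completeness is classical. Combining this with the eigenvalue computation yields the description \eqref{specalpha} and the claimed orthogonal basis.
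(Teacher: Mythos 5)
Your argument is correct, and it is essentially the standard proof: the paper itself states this proposition without proof, importing it from \cite[Th.\,4.1.20]{MM07}, where precisely this harmonic-oscillator ladder argument (the commutation relations $[b_i,b_j^+]=-4\pi\delta_{ij}$, identification of $\Ker(\LL)$ with the Bargmann--Fock space, raising by $b^\alpha$, and completeness via Hermite functions) is carried out. No discrepancies to report.
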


The corresponding orthogonal projection $\PP:L^2(\R^{2n})\rightarrow\Ker(\LL)$ has smooth kernel with respect to $dZ$, which for all $Z, Z'\in\R^{2n}$ is easily computed to be
\begin{equation}
\label{PP=1}
\PP(Z,Z')=\exp\left(-\frac{\pi}{2}\sum\limits_{i=1}^n (|z_i|^2+|z_i'|^2-2z_i\bar{z}_i')\right).
\end{equation}

As the operator $\LL$ in \eqref{model operator 1} acts only on the scalar part of functions with values in $(\Spin\otimes E)_{x_0}$, the kernel of the associated projection
\begin{equation}
\PP:L^2(\R^{2n},(\Spin\otimes E)_{x_0}))\rightarrow\Ker(\LL)
\end{equation}
acts on $(\Spin\otimes E)_{x_0}$ by scalar multiplication and is still given by \eqref{PP=1}. 

The space $(\Spin\otimes E)_{x_0}$ has a natural $\Z$-graduation given by the one on $\Spin$, and we denote by $(\C\otimes E)_{x_0}$ the degree $0$ subspace of $(\Spin\otimes E)_{x_0}$ for this graduation. We write
\begin{equation}\label{ID}
\ID:(\Spin\otimes E)_{x_0}\rightarrow (\C\otimes E)_{x_0}
\end{equation}
for the natural projection, which is orthogonal with respect to the Hermitian product. It induces a projection on the $L^2$-sections, still denoted by $\ID$, which is then orthogonal with respect to the $L^2$-scalar product.

The two terms defining $\LL^0_2$ in \eqref{model operator 1} are positive commuting operators, and the orthogonal projections on their kernels in $L^2(\R^{2n},(\Spin\otimes E)_{x_0})$ are given by $\PP$ and $\ID$ respectively. Consequently, the orthogonal projection on the kernel of $\LL^0_2$, seen as a densely defined self-adjoint operator acting on $L^2(\R^{2n},(\Spin\otimes E)_{x_0})$, is given by
\begin{equation}
\label{def P}
P=\PP\ID:L^2(\R^{2n},(\Spin\otimes E)_{x_0})\rightarrow\Ker(\LL^0_2).
\end{equation}

We denote by $\Ker(\LL^0_2)^{\perp}$ the orthogonal space of $\Ker(\LL^0_2)$ in $L^2(\R^{2n},(\Spin\otimes E)_{x_0})$, and by $P^{\perp}$ the associated orthogonal projection. Using \ref{spectrumL} and \eqref{model operator 1}, it is easy to compute explicitly the inverse of $\LL^0_2$ on $\Ker(\LL^0_2)^{\perp}$ by inverting its eigenvalues. It thus makes sense to write
\begin{equation}
\label{def L^{-1}}
(\LL^0_2)^{-1}P^{\perp}:L^2(\R^{2n},(\Spin\otimes E)_{x_0})\rightarrow L^2(\R^{2n},(\Spin\otimes E)_{x_0}).
\end{equation}

\subsection{Kernel calculus}\label{kernel calculus section}

We introduce now the kernel calculus on $\C^{n}$ developed by Ma and Marinescu in \citep{MM08b}, which will be the basis for our calculations in the next section.

If $T$ is a bounded operator on $L^2(\R^{2n},(\Spin\otimes E)_{x_0})$ with smooth kernel with respect to $dZ$, we will denote its evaluation at $Z, Z'\in\R^{2n}$ by
\begin{equation}
T(Z,Z')\in\End(\Spin\otimes E)_{x_0}.
\end{equation}

If $F(Z,Z')\in\End(\Spin\otimes E)_{x_0}$ is a polynomial in $Z,Z'\in\R^{2n}$, we denote by $F\PP$ the operator on $L^2(\R^{2n},(\Spin\otimes E)_{x_0})$ defined by the kernel $F(Z,Z')\PP(Z,Z')$, so that
\begin{equation}\label{FPP}
(F\PP)(Z,Z')=F(Z,Z')\PP(Z,Z'),
\end{equation}
for all $Z,Z'\in\R^{2n}$. By the explicit expression of $\PP(Z,Z')$ in \eqref{PP=1}, the formula \eqref{FPP} defines in fact a bounded operator on $L^2(\R^{2n},(\Spin\otimes E)_{x_0})$. 

Using these notations, we can state the following result, which comes essentially from \citep[\S 2]{MM08b}.

\begin{prop}\label{K}
For any $Q(Z,Z')$ and $F(Z,Z')\in\End(\Spin\otimes E)_{x_0}$, polynomials in $Z,Z'\in\R^{2n}$, there exists $\KK{F}{Q}(Z,Z')\in\End(\Spin\otimes E)_{x_0}$, polynomial in $Z,Z'\in\R^{2n}$, such that

\begin{equation}
\label{def KK}
\KK{F}{Q}\PP=(F\PP)(Q\PP),
\end{equation}

where the left hand side denotes the composition of two operators defined by their kernels as in \eqref{FPP}.

Furthermore, for all $F(Z,Z'),\ G(Z,Z')$ and $H(Z,Z')\in\End(\Spin\otimes E)_{x_0}$, polynomials in $Z, Z'\in\R^{2n}$, the following formulas hold:

\begin{equation}
\label{K1}
\KKK{F}{G}{H}=\KKk{F}{G}{H},
\end{equation}

\begin{equation}
\label{K2}
\KK{\ID}{\ID}=\ID.
\end{equation}

For any $q(Z)$ polynomial in $Z$ with scalar values,

\begin{equation}
\label{K3}
\KK{F}{q(Z)G}=\KK{q(Z')F}{G}.
\end{equation}

For any $Q(Z)$ polynomial in $Z$ with values in $\End(E_{x_0})$,

\begin{equation}
\label{K4}
\begin{split}
\KK{Q(Z)F}{G}=Q(Z)\KK{F}{G},\\
 \\
\KK{F}{GQ(Z')}=\KK{F}{G}Q(Z').
\end{split}
\end{equation}

Finally, for $A\in\End(E_{x_0})$ and $G(Z,Z')\in\End(\Spin\otimes E)_{x_0}$ polynomial in $Z,Z'\in\R^{2n}$ commuting with $A$, we have:

\begin{equation}
\label{K5}
\begin{split}
& A\KK{G}{F}=\KK{GA}{F}=\KK{G}{AF},\\
 \\
& \KK{FA}{G}=\KK{F}{AG}=\KK{F}{G}A.
\end{split}
\end{equation}
\end{prop}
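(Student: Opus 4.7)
My plan is to establish the existence of $\KK{F}{Q}$ by direct Gaussian integration against the explicit kernel \eqref{PP=1}, and then to derive the identities \eqref{K1}--\eqref{K5} from uniqueness of this polynomial together with elementary properties of integral kernel composition.

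For existence, the starting point is
\[
((F\PP)(Q\PP))(Z,Z') = \int_{\R^{2n}} F(Z,Z'')\, Q(Z'',Z')\, \PP(Z,Z'')\, \PP(Z'',Z')\, dZ'',
\]
together with the key algebraic identity
\[
\PP(Z,Z'')\,\PP(Z'',Z') = \PP(Z,Z')\, \exp\bigl(-\pi(z''-z)(\bar z''-\bar z')\bigr),
\]
which one verifies by rearranging the exponents in \eqref{PP=1}. Shifting the integration variable from $Z''$ to $W$ with $w = z''-z$ then reduces the integral to
\[
\PP(Z,Z') \int_{\R^{2n}} F(Z,Z+W)\, Q(Z+W,Z')\, \exp\bigl(-\pi |w|^2 - \pi w(\bar z-\bar z')\bigr)\, dW.
\]
Since $F$ and $Q$ are polynomial, only finitely many Gaussian moments contribute, and expanding the generating factor $\exp(-\pi w(\bar z-\bar z'))$ yields a polynomial in $Z,Z'$, which I take to be $\KK{F}{Q}(Z,Z')$. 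The main technical obstacle here is the bookkeeping of the non-commutative matrix product $F(Z,Z'')Q(Z'',Z')$ together with the shift in the integration variable, but the Gaussian integration itself reduces to standard moment computations once the identity above is in hand.

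Uniqueness of $\KK{F}{Q}$ follows because $\PP$ is nowhere vanishing, so the map $R\mapsto R\PP$ is injective on polynomial kernels. Given uniqueness, \eqref{K1} is immediate from associativity of operator composition, and \eqref{K2} follows from $\PP\circ\PP = \PP$ (as $\PP$ is an orthogonal projection) combined with the observation that $\ID$ acts as the identity on the $(\C\otimes E)_{x_0}$ component. For \eqref{K3}, the scalar polynomial $q(Z'')$ appearing in the integrand can be absorbed equally well into $F$ at its right argument, yielding $q(Z')F$ evaluated at $Z''$, or into $G$ at its left argument, and uniqueness identifies the two resulting polynomials. For \eqref{K4}, polynomials depending only on $Z$ or only on $Z'$ are constant with respect to the integration variable and pass freely past the integral from the left or the right. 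Finally, \eqref{K5} uses the hypothesis that $A\in\End(E_{x_0})$ commutes with $G$ to slide $A$ through the matrix product $F(Z,Z'')G(Z'',Z')$ at any intermediate position, producing the three equivalent forms on each line.
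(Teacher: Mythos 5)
Your proof is correct, but it reaches the existence statement by a genuinely different route than the paper. The paper never evaluates the composed Gaussian integral in closed form: it first reduces to the case $F=1$ with $G=G(Z)$ (the variable $Z'$ being a parameter), and computes $\PP(G\PP)$ by induction on the degree of $G$ using the spectral data of \ref{spectrumL} --- concretely the reproducing identities $\PP(z^\beta\PP)=z^\beta\PP$ and $\PP(\bar z_i\PP(Z,Z'))=\bar z_i'\PP(Z,Z')$ (the latter deduced from $b_i\PP=2\pi(\bar z_i-\bar z_i')\PP$ and the orthogonality of $b_i\PP$ to $\Ker\LL$), together with the commutator $[G(Z),b_i]=2\tfrac{\partial}{\partial z_i}G(Z)$; the general case is then reduced to this one by the pull-out identities \eqref{K3K4pre}, which simultaneously deliver \eqref{K3} and \eqref{K4}. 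You instead get existence in one stroke from the factorization $\PP(Z,Z'')\PP(Z'',Z')=\PP(Z,Z')\exp\bigl(-\pi\sum_i(z_i''-z_i)(\bar z_i''-\bar z_i')\bigr)$, which I have checked is correct, followed by Gaussian moment computations; the one point you should spell out is why the series expansion of $\exp\bigl(-\pi w(\bar z-\bar z')\bigr)$ truncates, namely that $\int_{\C^n} w^\alpha\bar w^\beta e^{-\pi|w|^2}\,dW$ vanishes unless $\alpha=\beta$, so only finitely many terms survive against the polynomial $F(Z,Z+W)Q(Z+W,Z')$, whose degree in $\bar w$ is bounded. Your uniqueness observation (injectivity of $R\mapsto R\PP$ since $\PP$ never vanishes) and your derivations of \eqref{K1}--\eqref{K5} from it agree in substance with the paper's. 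The trade-off is that the paper's inductive recipe directly generates the explicit table \eqref{kernel calculus} that drives all subsequent computations, whereas your closed-form Gaussian argument is more self-contained for the bare existence and uniqueness claims but would still require the moment computations to produce those explicit entries.
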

\begin{proof}
Let us first deal with the case $F=1$. The kernel of the composition $\PP(G\PP)$ is given by $\PP(G(Z,Z')\PP(Z,Z'))$, where the operator $\PP$ acts on the variable $Z\in\R^{2n}$. Thus the variable $Z'\in\R^{2n}$ acts as a parameter in this situation, and we are reduced to the case $G(Z,Z')=G(Z)$ not depending on $Z'$. Using \ref{spectrumL}, this can be computed by induction on the degree of $G$ in $z, \bar{z}\in\C^n$:

First, by \ref{spectrumL} and \eqref{PP=1}, we get
\begin{equation}\label{zbeta}
\PP(z^\beta\PP(Z,Z'))=z^\beta\PP(Z,Z'),
\end{equation}
for any $\beta\in\N^n$. Next, let us notice that by \eqref{model operator 1} and \eqref{def P},
\begin{equation}\label{zbar}
b_i\PP(Z,Z')=2\pi(\bar{z}_i-\bar{z}_i')\PP(Z,Z').
\end{equation}

Here, $b_i$ defined in \eqref{model operator 1} acts on the $Z$ variable. As $b_i\PP(Z,Z')$ is in the orthogonal of $\Ker(\LL)$ by \ref{spectrumL}, we get using \eqref{zbeta},
\begin{equation}\label{Pzi=Pzi'bar}
\PP(\bar{z}_i\PP(Z,Z'))=\bar{z}_i'\PP(Z,Z').
\end{equation}

Now by the definition of $b_i$ in \eqref{model operator 1}, we know that
\begin{equation}\label{bicom}
[G(Z),b_i]=2\frac{\partial}{\partial z_i}G(Z),
\end{equation}
for any $G(Z)\in\End(\Spin\otimes E)_{x_0}$ polynomial in $Z\in\R^{2n}$. Using the fact that $b^\alpha z^\beta\PP(Z,Z')$ is in $\Ker(\LL)^{\perp}$ by \ref{spectrumL}, we can compute the case of general $F$ by induction through repeated applications of \eqref{zbar} and \eqref{bicom}.

Now for the general case, if $F_1(Z)$ and $F_2(Z')\in\End(\Spin\otimes E)_{x_0}$ are two polynomials in $Z$ and $Z'\in\R^{2n}$ respectively, by the definition of operators associated to kernels as in \eqref{FPP}, we have the following two easy facts:
\begin{equation}\label{K3K4pre}
\begin{split}
(F_1(Z)G\PP)(Z,Z') & =F_1(Z)(G\PP)(Z,Z'),\\
 \\
(GF_2(Z')\PP)(Z,Z') & =(G\PP)(Z,Z')F_2(Z'),
\end{split}
\end{equation}
where we use for the second equality the fact that $\PP(Z,Z')$ has scalar values by \eqref{PP=1}. By \eqref{def KK} and the usual formula for composition of kernels, recall that
\begin{equation}\label{kernelcompo}
\left(\KK{F}{G}\PP\right)(Z,Z')=\int_{\R^{2n}} F(Z,Z'')\PP(Z,Z'')G(Z'',Z')\PP(Z'',Z') dZ''.
\end{equation}

Then from \eqref{K3K4pre} and \eqref{kernelcompo}, for any $F(Z,Z')\in\End(\Spin\otimes E)_{x_0}$ polynomial in $Z,Z'\in\R^{2n}$,
\begin{equation}\label{K3K4}
\begin{split}
\left(\KK{F_1(Z)F}{G}\PP\right)(Z,Z') & =F_1(Z)\left(\KK{F}{G}\PP\right)(Z,Z'),\\
 \\
\left(\KK{FF_2(Z')}{G}\PP\right)(Z,Z') & =\left(\KK{F}{F_2(Z)G}\PP\right)(Z,Z'),
\end{split}
\end{equation}
so the general case reduces to the previous one. As a byproduct of \eqref{K3K4pre} and \eqref{K3K4}, we get \eqref{K3} and \eqref{K4} as well.

The associativity \eqref{K1} is obvious from \eqref{def KK}. As $\PP(Z,Z')$ commutes with $\ID$ by \eqref{PP=1}, we get \eqref{K2}. Finally, \eqref{K3K4pre} and \eqref{K3K4} applied to $A\in\End(E_{x_0})$ constant and commuting with $G$ gives \eqref{K5}.
\end{proof}

\ref{K}, together with its proof, is at the basis of the computations in this paper. As an application, we compute the following special cases of the kernel calculus for any $1\leq i,j\leq n$, which will be used constantly in the forthcoming computations:
\begin{equation}
\label{kernel calculus}
\begin{split}
& \KK{\ID}{\bar{z}_j\ID}=\bar{z}_j'\ID,\quad \KK{\ID}{z_j\ID}=z_j\ID,\\
& \KK{z_i\ID}{\bar{z}_j\ID}=z_i\bar{z}_j'\ID,\quad \KK{\bar{z}_i\ID}{z_j\ID}=\bar{z}_iz_j\ID,\\
& \KK{z_i'\ID}{\bar{z}_j\ID}=\frac{1}{\pi}\delta_{ij}\ID+z_i\bar{z}_j'\ID,\\
& \KK{\bar{z}_i'\ID}{z_j\ID}=\frac{1}{\pi}\delta_{ij}\ID+\bar{z}_i'z_j\ID,\\
& \KK{\ID}{\bar{z}_iz_j\ID}=\frac{1}{\pi}\delta_{ij}\ID+\bar{z}_i'z_j\ID,\\
& \KK{\ID}{z_i'\ID}=z_i'\ID,\quad \KK{\ID}{\bar{z}_i'\ID}=\bar{z}_i'\ID,\\
& \KK{\ID}{z_iz_j\ID}=z_iz_j\ID,\quad \KK{\ID}{\bar{z}_i\bar{z}_j\ID}=\bar{z}_i'\bar{z}_j'\ID.
\end{split}
\end{equation}

%\begin{proof}
%Little is to be said to describe it as the computations are quite involved, so we simply refer to \citep[(2.33),\ (2.34)]{MM06}.
%\end{proof}

\section{Berezin-Toeplitz quantization and Bergman kernel}

In this section, we recall the results of \citep{MM06}, \citep{MM07} and \citep{MM08b} on the reduction of Berezin-Toeplitz quantization to the local model described in \ref{local model}, and then go on to the computation of the second coefficient of the asymptotic expansion. The general reference for the background on the theory is \citep{MM07}.

\subsection{The asymptotic expansion of Toeplitz operators}

For any $p\in\N$, let $T_p$ be an operator acting on $L^2(X,E_p)$ with smooth kernel $T_p(x,x')$ with respect to $dv_X$ at $x, x'\in X$. For all $x_0\in X$, it induces
\begin{equation}
T_{p,x_0}(Z,Z')\in\End(\Spin\otimes E)_{x_0}
\end{equation}
through the trivializations given in \ref{local model}, where $Z, Z'\in B(0,\epsilon_0)\subset T_{x_0}X\simeq\R^{2n}$ are the respective images of $x, x'\in B^X(x_0,\epsilon_0)\subset X$ in the exponential coordinates. 

To estimate the kernels of any family $\{T_p\}_{p\in\N}$ of operators acting on $L^2(X,(\Spin\otimes E)_{x_0})$, we will use the following notation given in \cite[Not.4.4]{MM08b}: we write
\begin{equation}\label{cong}
p^{-n}T_{p,x_0}(Z,Z')\cong\sum\limits_{r=0}^{\infty}(\QQ{r}\PP)(\sqrt{p}Z,\sqrt{p}Z')p^{-r/2} + O(p^{-\infty}),
\end{equation}
where $\{\QQ{r}(Z,Z')\}_{r\in\N}$ is a family of polynomials in $Z, Z'\in\R^{2n}$ with values in $\End(\Spin\otimes E)_{x_0}$ and depending smoothly on $x_0\in X$, if for any $k\in\N$, there is $\epsilon>0, C_0>0$ such that for any $l\in\N$, there exist $C>0, M\in\N$, such that for $|Z|, |Z'|<\epsilon$, the following estimate holds:
\begin{equation}\label{defcong}
\begin{split}
\left|p^{-n}\right. & T_{p,x_0}(Z,Z')\kappa_{x_0}^{1/2}(Z)\kappa_{x_0}^{1/2}(Z')-\sum\limits_{r=0}^k(\QQ{r}\PP)(\sqrt{p}Z,\sqrt{p}Z')\left. p^{-r/2}\right|_{\CC^l}\\
&\leq Cp^{-\frac{k+1}{2}}(1+\sqrt{p}|Z|+\sqrt{p}|Z'|)^M\exp\left(-C_0\sqrt{p}|Z-Z'|\right)+O(p^{-\infty}).
\end{split}
\end{equation}

Here $|.|_{\CC^l}$ denotes the $\CC^l$ norm with respect to $x_0\in X$ with respect to the induced connection on the pullback bundle $\pi^*(\End(\Spin\otimes E))$ over $TX\times_X TX$, where $\pi:TX\times_X TX\rightarrow X$ is the fibre product of $TX$ with itself over $X$. In the same way, when we say that a polynomial in $Z, Z'\in\R^{2n}$ with values in $\End(\Spin\otimes E)_{x_0}$ depends smoothly on $x_0\in X$, it is in that sense.

The function $\kappa_{x_0}\in\cinf(B(0,\epsilon_0))$ is defined for $Z\in B(0,\epsilon_0)\subset\R^{2n}$ by
\begin{equation}
dv_X(Z)=\kappa_{x_0}(Z)dZ.
\end{equation}

Its appearance in the formula \eqref{defcong} is necessary to make the comparison between kernels consistent. Note that $\kappa_{x_0}(0)=1$.

We will apply the notation \eqref{defcong} to estimate the kernel of the Bergman projection $P_p$ on $\Ker(D_p)$ as defined in the introduction, namely through the following \emph{off-diagonal expansion} of the Bergman kernel:

\begin{theo}{(\citep[Th.4.18']{DLM06})}
\label{offdiag}
There exists a family $\{\J{r}\}_{r\in\N}$ of polynomials in $Z, Z'\in\R^{2n}$ with values in $\End(\Spin\otimes E)_{x_0}$ and depending smoothly on $x_0\in X$ such that the following expansion holds in the sense of \eqref{defcong}:

\begin{equation}
\label{exp J}
p^{-n}P_{p,x_0}(Z,Z')\cong\sum\limits_{r=0}^{\infty}(\J{r}\PP)(\sqrt{p}Z,\sqrt{p}Z')p^{-r/2} + O(p^{-\infty}).
\end{equation}

\end{theo}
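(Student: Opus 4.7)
The plan is to prove the off-diagonal expansion by combining localization near $x_0$ via the spectral gap of $D_p^2$ and finite propagation speed, a rescaling $Z \mapsto Z/\sqrt{p}$ that identifies the leading order of $D_p^2/p$ with the model operator $\LL^0_2$ of \eqref{model operator 1}, and a resolvent expansion of the associated spectral projection. The localization step uses that the spectrum of $D_p^2$ has a gap: apart from the zero eigenvalue, all eigenvalues are bounded below by $cp$ for some $c > 0$, as established in \cite[Chap.~8]{MM07}. Combined with finite propagation speed for the wave equation of $D_p$, this implies $|P_{p,x_0}(Z,Z')|_{\CC^l} = O(p^{-\infty})$ uniformly in $x_0 \in X$ whenever $|Z| + |Z'| \geq \epsilon$, so the problem reduces to a kernel on $B(0,\epsilon_0)^2$ via the trivializations of \ref{local model}.

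Next, I would rescale. Setting $(S_p f)(Z) = f(Z/\sqrt{p})$ and
\begin{equation*}
\LL_p = S_p^{-1}\, \kappa_{x_0}^{1/2}\, (D_p^2/p)\, \kappa_{x_0}^{-1/2}\, S_p,
\end{equation*}
a Taylor expansion in normal coordinates of the metric, curvature and connection coefficients appearing in the Lichnerowicz formula for $D_p^2$ yields
\begin{equation*}
\LL_p = \LL^0_2 + \sum_{r=1}^{m} p^{-r/2}\, \OO_r + O\bigl(p^{-(m+1)/2}\bigr),
\end{equation*}
where each $\OO_r$ is a second-order differential operator on $\R^{2n}$ whose coefficients are polynomials in $Z$ with values in $\End(\Spin \otimes E)_{x_0}$, depending smoothly on $x_0$. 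This is the main analytic input; the leading curvature contribution is precisely $\LL^0_2$ by the prequantization condition \eqref{prequantization}.

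From this I would extract the asymptotic expansion of the spectral projection $P_{\LL_p}$ onto $\Ker(\LL_p)$. Using the contour integral
\begin{equation*}
P_{\LL_p} = \frac{1}{2\pi\sqrt{-1}} \oint_{\delta} (\lambda - \LL_p)^{-1}\, d\lambda,
\end{equation*}
where $\delta$ is a small circle around $0$ contained in the resolvent set of $\LL^0_2$ away from $0$, and expanding $(\lambda - \LL_p)^{-1}$ as a Neumann series in $(\lambda - \LL^0_2)^{-1}$, one obtains
\begin{equation*}
P_{\LL_p} = P + \sum_{r=1}^{m} p^{-r/2}\, F_r + O\bigl(p^{-(m+1)/2}\bigr),
\end{equation*}
with each $F_r$ a finite composition of $P$, $(\LL^0_2)^{-1} P^{\perp}$ as in \eqref{def L^{-1}}, and the $\OO_{r_j}$. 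Since these building blocks all preserve the space of finite linear combinations of the form (polynomial in $Z, Z'$) $\times\, \PP(Z,Z')$—a direct consequence of \ref{spectrumL} together with \ref{K}—the kernel of $F_r$ automatically takes the form $(\J{r}\PP)(Z,Z')$ for a polynomial $\J{r}$ depending smoothly on $x_0$. Undoing $S_p$ then converts this into the expansion \eqref{exp J} after multiplication by the standard $p^{-n}$ factor.

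The main obstacle is to upgrade this formal expansion to the uniform $\CC^l$-estimates with exponential decay in $\sqrt{p}\,|Z - Z'|$ demanded by \eqref{defcong}. This requires weighted Sobolev estimates and off-diagonal decay bounds for $(\lambda - \LL_p)^{-1}$ uniform in $p$, in $\lambda \in \delta$, and in $x_0 \in X$, obtained via commutator arguments against $Z - Z'$ that transfer the local ellipticity of $\LL^0_2$ to quantitative decay of the resolvent kernel. These estimates and their smooth dependence on $x_0$ are the technical heart of \cite[Chap.~4, Chap.~8]{MM07}, and I would invoke them directly rather than reproduce them.
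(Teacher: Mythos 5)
Your outline is correct and follows essentially the same strategy as the paper's source for this statement: the paper does not prove \ref{offdiag} itself but quotes it from \citep[Th.4.18']{DLM06}, whose argument proceeds exactly as you describe — spectral-gap localization with finite propagation speed, rescaling to the model operator (the expansion you write is the one recalled in \eqref{expLt2}), and a contour-integral/resolvent expansion of the spectral projection producing the formulas of \ref{J}. Deferring the uniform weighted estimates to \cite[Chap.~4, Chap.~8]{MM07} is consistent with how the paper itself treats this result as a black box.
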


\ref{offdiag} gives a strong control on the Bergman kernel outside the diagonal, and is used in \citep{MM08b} to prove the following result:

\begin{theo}{(\citep[Th.1.1]{MM08b})}
\label{expansion}
Let $f$ and $g\in\cinf(X,\End(E))$. There exist families of polynomials in $Z, Z'\in\R^{2n}$ with values in $\End(\Spin\otimes E)_{x_0}$ and depending smoothly on $x_0\in X$, respectively denoted by $\{\QQ{r}(f)\}_{r\in\N}$ and $\{\QQ{r}(f,g)\}_{r\in\N}$, such that the following expansions hold in the sense of \eqref{cong}:

\begin{equation}
\label{exp Q}
p^{-n}T_{f,p,x_0}(Z,Z')\cong\sum\limits_{r=0}^{\infty}(\QQ{r}(f)\PP)(\sqrt{p}Z,\sqrt{p}Z')p^{-r/2} + O(p^{-\infty}),
\end{equation}

\begin{equation}
\label{exp Q2}
p^{-n}(T_{f,p}T_{g,p})_{x_0}(Z,Z')\cong\sum\limits_{r=0}^{\infty}(\QQ{r}(f,g)\PP)(\sqrt{p}Z,\sqrt{p}Z')p^{-r/2} + O(p^{-\infty}).
\end{equation}
\end{theo}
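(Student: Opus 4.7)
The plan is to prove both expansions \eqref{exp Q} and \eqref{exp Q2} by localizing the integral defining each kernel and then applying the off-diagonal expansion \eqref{exp J} of the Bergman kernel together with the kernel calculus of \ref{K}. The key analytic input is the exponential off-diagonal decay built into the notation \eqref{defcong}, which allows us to work in a single geodesic chart around an arbitrary $x_0\in X$.

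I would first establish \eqref{exp Q}. Writing the Schwartz kernel of $T_{f,p}=P_pfP_p$ as
\begin{equation*}
T_{f,p}(x,x') = \int_X P_p(x,y)\, f(y)\, P_p(y,x')\, dv_X(y),
\end{equation*}
the Gaussian decay at scale $1/\sqrt{p}$ inherent to \eqref{exp J} shows that for $x,x'\in B^X(x_0,\epsilon/2)$ the contribution from $y\notin B^X(x_0,\epsilon)$ is $O(p^{-\infty})$. Passing to normal coordinates and rescaling $y=\exp_{x_0}(W/\sqrt{p})$ yields $dv_X(y)=p^{-n}\kappa_{x_0}(W/\sqrt{p})\,dW$. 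I would then Taylor expand $f(\exp_{x_0}(W/\sqrt{p}))$ and $\kappa_{x_0}^{1/2}(W/\sqrt{p})$ in powers of $1/\sqrt{p}$, insert the expansion \eqref{exp J} on each $P_p$-factor, and collect the powers of $p^{-1/2}$. The remaining $W$-integrals are convolutions of Gaussians against polynomials in the model, and by \ref{K} they reproduce kernels of the form $\KK{F}{Q}\PP$. This yields \eqref{exp Q} with polynomials $\QQ{r}(f)$ expressed by nested $\KK{\cdot}{\cdot}$ applied to the $\J{s}$ and to the Taylor coefficients of $f$ and $\kappa_{x_0}$.

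The second expansion \eqref{exp Q2} follows by precisely the same mechanism applied to
\begin{equation*}
(T_{f,p}T_{g,p})(x,x') = \int_X T_{f,p}(x,y)\, T_{g,p}(y,x')\, dv_X(y),
\end{equation*}
now using the just-established expansion \eqref{exp Q} for each factor. Localization is again valid because the Gaussian factor $\PP(\sqrt{p}Z,\sqrt{p}W)$ forces exponential decay in $|Z-W|$, and the identities \eqref{K1}--\eqref{K5} reduce the iterated convolutions to polynomials $\QQ{r}(f,g)$ built out of $\QQ{s}(f)$, $\QQ{s}(g)$ and the $\J{s}$.

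The main obstacle will be the uniform $O(p^{-\infty})$ remainder in the sense of \eqref{defcong}: one must verify that truncating each Bergman or Toeplitz factor at a finite order produces an error that still satisfies a bound of the form \eqref{defcong} after being composed. This amounts to showing that the class of kernels of the form (polynomial)$\times$(Gaussian) in $\R^{2n}$ is stable under convolution with a remainder term carrying the expected power of $1/\sqrt{p}$, which requires carefully combining the Gaussian decay of $\PP$ with Schwartz-class estimates on the Taylor remainders of $f$, $g$ and $\kappa_{x_0}$ after the substitution $W=\sqrt{p}\,y$. Once these analytic estimates are in hand, the identification of $\QQ{r}(f)$ and $\QQ{r}(f,g)$ is purely algebraic and follows from \ref{K}.
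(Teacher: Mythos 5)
The paper does not prove this statement at all: it is quoted verbatim from \citep[Th.1.1]{MM08b}, and your sketch reproduces exactly the strategy of that reference (localization of the kernel of $P_pfP_p$, rescaling by $\sqrt{p}$, Taylor expansion of $f$ and $\kappa_{x_0}$, insertion of the off-diagonal expansion \eqref{exp J}, and identification of the coefficients via the kernel calculus of \ref{K}). One point to make precise: the localization step cannot be extracted from \eqref{exp J}/\eqref{defcong} alone, since those estimates are only stated for $|Z|,|Z'|<\epsilon$; you additionally need the far off-diagonal decay $P_p(x,y)=O(p^{-\infty})$ uniformly for $d(x,y)\geq\epsilon_0$, which is a separate (standard) input from \citep{DLM06} or \citep[Prop.\,4.1.5]{MM07}. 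With that ingredient added, your argument is the same as the one in the cited source.
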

%\begin{proof}
%The proof of \eqref{exp J} can be found in \citep[section 4.2]{MM07} for the complex case and generalized to the symplectic case in \citep[section 8.1]{MM07}. The proof of \eqref{exp Q} and \eqref{exp Q2} is precisely the goal of \citep[section 4]{MM08b}
%
%
%\end{proof}

Recall that in \ref{local model}, we defined an operator $\LL^0_2$ aproximating $D_p$ for large $p$, after a convenient rescaling in $\sqrt{p}$. We can refine this by the following (see \citep[\S 4.1.3]{MM07} for a precise statement): after a convenient rescaling in $\sqrt{p}=:1/t$, the restriction of $D_p$ on $B^X(x_0,\epsilon_0)$ is equal, through the trivializations of \ref{local model}, to an operator $\LL^t_2$ on $B(0,\epsilon_0)$ satisfying
\begin{equation}\label{expLt2}
\LL^t_2=\LL^0_2+\sum_{r=1}^mt^r \OO_r +O(t^{m+1}),
\end{equation}
for any $m\in\N$, where $\{\OO_r\}_{r\in\N}$ is a family of differential operators of order equal or less than $2$, with coefficients explicitly computable in term of local data, and where the differential operator $O(t^{m+1})$ has its coefficients and their derivatives up to order $k$ dominated by $C_k t^{m+1}$ for any $k\in\N$.

The family of polynomials $\{\J{r}\}_{r\in\N}$ defined in \eqref{exp J} can then be computed explicitly by induction using \eqref{expLt2}. In particular, 
the following lemma, which has been established in \cite[\S 2.2]{MM06}, gives the first three elements of this family: 

\begin{lem}\label{J}
For $\OO_1$ and $\OO_2$ defined by \eqref{expLt2}, the following formulas hold:

\begin{equation}
\label{J0}
\J{0}\PP=P,~~i.e.~~\J{0}=\ID,
\end{equation}

\begin{equation}
\label{J1}
\J{1}\PP=-(\LL^0_2)^{-1}P^{\perp}\OO_1P-P\OO_1 (\LL^0_2)^{-1}P^{\perp},
\end{equation}

\begin{equation}
\begin{split}
\label{J2}
\J{2}&\PP=(\LL^0_2)^{-1}P^{\perp}\OO_1 (\LL^0_2)^{-1}P^{\perp}\OO_1 P-(\LL^0_2)^{-1}P^{\perp}\OO_2 P\\
& +P\OO_1 (\LL^0_2)^{-1}P^{\perp}\OO_1 (\LL^0_2)^{-1}P^{\perp}-P\OO_2 (\LL^0_2)^{-1}P^{\perp}\\
+ & (\LL^0_2 )^{-1}P^{\perp}\OO_1 P\OO_1 (\LL^0_2)^{-1}P^{\perp}-P\OO_1(\LL^0_2)^{-2}P^{\perp}\OO_1 P.
\end{split}
\end{equation}

Moreover, $\OO_1$ commutes with any $A\in\End(E_{x_0})$, and we have the formula
\begin{equation}
\label{POP=0}
P\OO_1 P=0.
\end{equation}

In particular, $\J{0}$ and $\J{1}$ commute with any $A\in\End(E_{x_0})$.

\end{lem}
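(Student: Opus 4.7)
The plan is to view the Bergman kernel expansion as arising from the perturbation expansion of the spectral projection of $\LL^t_2$ onto the near-kernel of $\LL^0_2$, with $t=1/\sqrt p$. By \ref{spectrumL}, the eigenvalue $0$ of $\LL^0_2$ is isolated with spectral gap $4\pi$ from the rest of the spectrum, and \eqref{expLt2} exhibits $\LL^t_2$ as a smooth deformation of $\LL^0_2$. Fix a small contour $\delta$ around $0$ in the resolvent set of $\LL^0_2$ and of $\LL^t_2$ for all sufficiently small $t$; the projection onto the eigenspace of $\LL^t_2$ bifurcating from $0$ admits the representation
\[
P^t=\frac{1}{2\pi i}\oint_\delta(\lambda-\LL^t_2)^{-1}\,d\lambda.
\]
The localisation and rescaling procedure of \cite[\S 4.1]{MM07} together with the off-diagonal estimates of \cite{DLM06} imply that the kernel of the Bergman projection $P_p$ of \eqref{exp J}, written in the rescaled coordinates of \ref{local model}, is asymptotic to the kernel of $P^t$ in the sense of \eqref{defcong}; thus identifying $\J{r}\PP$ reduces to expanding $P^t$ in powers of $t$ as an operator on $L^2(\R^{2n},(\Spin\otimes E)_{x_0})$.

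To carry out that expansion, iterate the resolvent identity with $V_t:=\sum_{r\geq 1}t^r\OO_r$,
\[
(\lambda-\LL^t_2)^{-1}=\sum_{k=0}^\infty(\lambda-\LL^0_2)^{-1}\bigl[V_t(\lambda-\LL^0_2)^{-1}\bigr]^k,
\]
and split the model resolvent by the spectral gap, $(\lambda-\LL^0_2)^{-1}=\lambda^{-1}P+R(\lambda)$, with $R(\lambda):=(\lambda-\LL^0_2)^{-1}P^\perp$ analytic at $\lambda=0$ and $R(0)=-(\LL^0_2)^{-1}P^\perp$. Collecting terms of order $t^0,t^1,t^2$ and evaluating residues at $\lambda=0$ then produces \eqref{J0}--\eqref{J2} directly. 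The constant term is $P=\J{0}\PP$. At order $t^1$, the double pole $\lambda^{-2}P\OO_1 P$ is killed by \eqref{POP=0}, leaving the two simple-pole residues $P\OO_1 R(0)+R(0)\OO_1 P$, which give \eqref{J1}. At order $t^2$, one sums the $\OO_2$-contribution $-P\OO_2(\LL^0_2)^{-1}P^\perp-(\LL^0_2)^{-1}P^\perp\OO_2 P$ with the residues of $(\lambda-\LL^0_2)^{-1}\OO_1(\lambda-\LL^0_2)^{-1}\OO_1(\lambda-\LL^0_2)^{-1}$; expanding in the $(\lambda^{-1}P,R)$-basis gives eight monomials, of which the three containing a factor $P\OO_1 P$ vanish. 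The three surviving simple-pole terms $\lambda^{-1}P\OO_1 R\OO_1 R$, $\lambda^{-1}R\OO_1 P\OO_1 R$, $\lambda^{-1}R\OO_1 R\OO_1 P$ reproduce the three $R(0)\OO_1 R(0)$-type lines of \eqref{J2}, and the surviving double pole $\lambda^{-2}P\OO_1 R(\lambda)\OO_1 P$ contributes $P\OO_1 R'(0)\OO_1 P=-P\OO_1(\LL^0_2)^{-2}P^\perp\OO_1 P$, which is the distinctive last term of \eqref{J2}.

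For the remaining assertions I would invoke the explicit description of $\OO_1$ obtained by Taylor-expanding $D_p^2$ at $x_0$ via the Lichnerowicz formula in the rescaled coordinates of \ref{local model}, as carried out in \cite[\S 1.5, \S 4.1.3]{MM07}: $\OO_1$ is a linear combination of monomials each linear in $Z$ and first order in $\partial/\partial Z$, with coefficients built only from first-order Taylor data of $g^{TX}$, $R^L$, and $\nabla^{\Cl}$ at $x_0$. Since none of these coefficients involve the connection $\nabla^E$, the operator $\OO_1$ commutes with every constant $A\in\End(E_{x_0})$. Since each monomial has total odd degree in $(Z,\partial/\partial Z)$, the operator $\OO_1$ is odd under $Z\mapsto -Z$, whereas the kernel $\PP(Z,Z')$ given by \eqref{PP=1} is even under $(Z,Z')\mapsto(-Z,-Z')$; sandwiched between two $P$'s, this forces $P\OO_1 P=0$, giving \eqref{POP=0}. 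The commutativity of $\J{0}=\ID$ and $\J{1}$ with any $A\in\End(E_{x_0})$ then follows from \eqref{J0}--\eqref{J1}, since $P$, $(\LL^0_2)^{-1}P^\perp$ and $\OO_1$ all commute with such $A$.

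The only real obstacle is technical rather than conceptual: translating the formal perturbation expansion above into the precise off-diagonal kernel expansion \eqref{defcong}, with its prescribed polynomial growth and Gaussian decay. This is the content of the uniform Sobolev-type estimates for the family $\LL^t_2$ developed in \cite{DLM06, MM06, MM07}; once that machinery is invoked, the present lemma is the mechanical reading off of the three leading coefficients.
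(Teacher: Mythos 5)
Your contour-integral/resolvent expansion of the spectral projection of $\LL^t_2$ is the standard route and is, in substance, the argument of the references the paper itself cites for this lemma (\cite[\S 2.2]{MM06}, \cite[\S 4.1.7]{MM07}); the paper gives no independent proof of \eqref{J0}--\eqref{J2} or \eqref{POP=0}, only the short deduction of the final commutation statement. Your bookkeeping of the eight monomials at order $t^2$ is correct and reproduces \eqref{J2} exactly, including the observation that \eqref{POP=0} is what simplifies the formula relative to \cite[\S 4.1.7]{MM07}.

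However, your proof of \eqref{POP=0} has a genuine gap. From the fact that $\OO_1$ is odd and $P$ is even under the reflection $Z\mapsto -Z$ you can only conclude that $P\OO_1P$ \emph{anticommutes} with the reflection, i.e.\ that its kernel $K(Z,Z')$ satisfies $K(-Z,-Z')=-K(Z,Z')$; this does not force $K=0$. A concrete counterexample to the inference is the multiplication operator $z_i$: it is odd under the reflection, yet $\PP z_i\PP=z_i\PP\neq 0$ by \eqref{kernel calculus} (equivalently, an operator anticommuting with the reflection may simply exchange the even and odd parts of $\Ker(\LL^0_2)$, which are both nontrivial). The actual proof of \eqref{POP=0} requires the explicit formula for $\OO_1$ from \cite[\S 2.2]{MM06}: each of its terms can be written with a factor $b_j$ on the left or a factor $b_j^+$ on the right (in the notation of \eqref{model operator 1}), and one concludes from $Pb_j=0$ and $b_j^+P=0$. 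The same explicit formula is what justifies your (correct, but asserted) claim that $\OO_1$ involves no data of $\nabla^E$ and hence commutes with constant endomorphisms of $E_{x_0}$. A minor further point: at order $t^1$ the double pole $\lambda^{-2}P\OO_1P$ contributes nothing simply because the residue of $\lambda^{-2}$ times a $\lambda$-independent operator vanishes, so \eqref{POP=0} is not needed there.
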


See also \cite[\S 4.1.7]{MM07} for a detailed exposition in the complex case. The assertion that $\OO_1$ commutes with endomorphisms of $E_{x_0}$ is clear from the explicit description given in \cite[\S 2.2]{MM06}. This, together with the fact that $P$ and $\LL^0_2$ act on $\End(E_{x_0})$ by scalar multiplication, implies the last assertion. We point out that due to \eqref{POP=0}, we give in \eqref{J2} a simpler formula than the one appearing in \cite[\S 4.1.7]{MM07}.

Following the notations at the beginning of this section, for any $f\in\cinf(X,\End(E))$, we write $f_{x_0}$ for the induced function on $B(0,\epsilon_0)\subset T_{x_0}X$ with values in $\End(E_{x_0})$ through the trivialization described in \ref{local model}. Note that in particular, $f_{x_0}(0)=f(x_0)$.

To describe the families $\{\QQ{r}(f)\}_{r\in\N}$ and $\{\QQ{r}(f,g)\}_{r\in\N}$ defined in \eqref{exp Q} and \eqref{exp Q2} respectively, we will need the introduction of the kernel calculus presented in \ref{local model}. This is done in the next lemma, which gives as well a formula for the coefficient $C_1(f,g)$ of \eqref{compo}. 

\begin{lem}{(\citep[Lem.4.6,~\S  4.3]{MM08b})}
\label{Q} For $f$ and $g\in\cinf(X,\End(E))$, the following formulas hold:

\begin{equation}
\label{Q1}
\QQ{r}(f,g)=\sum\limits_{r_1+r_2=r} \KK{\QQ{r_1}(f)}{\QQ{r_1}(g)},
\end{equation}

\begin{equation}
\label{Q2}
\QQ{r}(f)=\sum\limits_{r_1+r_2+|\alpha|=r} \KK{J_{r_1,x_0}}{\sum\limits_{|\alpha|=2}\frac{\partial^2 f_{x_0}}{\partial Z^\alpha}(0)\frac{Z^\alpha}{\alpha !} J_{r_2,x_0}},
\end{equation}

\begin{equation}
\label{Q3}
\QQ{1}(f)=f(x_0)\J{1}+\KK{\J{0}}{\sum_{j=1}^{2n}\frac{\partial f_{x_0}}{\partial Z_j}(0)Z_j \J{0}},
\end{equation}

and for $C_1(f,g)\in\cinf(X,\End(E))$ defined by \eqref{compo}, we have

\begin{equation}
\label{C1}
C_1(f,g)(x_0)\ID=\QQ{2}(f,g)(0,0)-\QQ{2}(fg)(0,0).
\end{equation}

\end{lem}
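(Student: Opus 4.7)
The strategy is to combine the off-diagonal expansion \ref{offdiag} of the Bergman kernel with Taylor expansions of $f$ and $g$ around $x_0$, and then match coefficients in the asymptotic parameter $p^{-1/2}$ at the rescaled scale $Z \mapsto Z/\sqrt{p}$. All four identities should follow from systematic bookkeeping once the basic composition-of-kernels rule is in place. The main technical care is that the formal matching genuinely holds in the norm \eqref{defcong}, which relies on the exponential off-diagonal decay and the fact that the $\QQ{r}$ are polynomial in $Z, Z'$.

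For \eqref{Q1}, I would start from the kernel convolution
\begin{equation*}
(T_{f,p}T_{g,p})_{x_0}(Z,Z') = \int T_{f,p,x_0}(Z,W)\,T_{g,p,x_0}(W,Z')\,\kappa_{x_0}(W)\,dW
\end{equation*}
in the local trivialization of \ref{local model}. Substituting the expansions \eqref{exp Q} for both factors, using the decay in $\sqrt{p}|Z-W|$ furnished by \eqref{defcong} to reduce the integral to $|W|\lesssim 1/\sqrt{p}$ modulo $O(p^{-\infty})$, and rescaling $W\mapsto W/\sqrt{p}$, the coefficient of $p^{-r/2}$ becomes a sum over pairs $r_1+r_2=r$ of integrals
\begin{equation*}
\int (\QQ{r_1}(f)\PP)(\sqrt{p}Z,W'')\,(\QQ{r_2}(g)\PP)(W'',\sqrt{p}Z')\,dW'',
\end{equation*}
which by the defining relation \eqref{def KK} is exactly $(\KK{\QQ{r_1}(f)}{\QQ{r_2}(g)}\PP)(\sqrt{p}Z,\sqrt{p}Z')$, giving \eqref{Q1}.

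For \eqref{Q2}, I would apply the same procedure to
\begin{equation*}
T_{f,p,x_0}(Z,Z') = \int P_{p,x_0}(Z,W)\,f_{x_0}(W)\,P_{p,x_0}(W,Z')\,\kappa_{x_0}(W)\,dW,
\end{equation*}
substituting the expansion \eqref{exp J} for each copy of $P_p$ and Taylor expanding $f_{x_0}(W)=\sum_\alpha \frac{\partial^{|\alpha|}f_{x_0}}{\partial Z^\alpha}(0)\frac{W^\alpha}{\alpha!}$. After rescaling $W\mapsto W/\sqrt{p}$, each monomial $W^\alpha$ contributes a prefactor $p^{-|\alpha|/2}$, so the coefficient of $p^{-r/2}$ is assembled from triples $r_1+r_2+|\alpha|=r$, and each resulting convolution is a kernel bracket by \eqref{def KK}, yielding \eqref{Q2}. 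The specialization \eqref{Q3} is obtained by setting $r=1$ in \eqref{Q2}: only the triples $(1,0,0),\ (0,1,0),\ (0,0,1)$ contribute; using $\J{0}=\ID$ from \eqref{J0}, identity \eqref{K2}, and the commutation of $\J{1}$ with $f(x_0)\in\End(E_{x_0})$ granted by the final assertion of \ref{J} (together with \eqref{K5}), the first two brackets collapse to $f(x_0)\J{1}$, while the third is already in the desired form.

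Finally, for \eqref{C1}, substituting the expansion \eqref{deftoep} of each $T_{C_r(f,g),p}$ into \eqref{compo} and using $C_0(f,g)=fg$ gives
\begin{equation*}
T_{f,p}T_{g,p} - T_{fg,p} = p^{-1}\,T_{C_1(f,g),p} + O(p^{-2}).
\end{equation*}
Expanding all three kernels via \eqref{exp Q} and \eqref{exp Q2} in the sense of \eqref{defcong}, evaluating at $Z=Z'=0$, and reading off the coefficient of $p^{-1}$ (which is the $r=2$ term in the $p^{-r/2}$ expansion) produces
\begin{equation*}
\QQ{2}(f,g)(0,0) - \QQ{2}(fg)(0,0) = \QQ{0}(C_1(f,g))(0,0).
\end{equation*}
The $r=0$ case of \eqref{Q2}, using \eqref{J0} and \eqref{K2}, gives $\QQ{0}(h)=h(x_0)\,\ID$ for every $h\in\cinf(X,\End(E))$, so the right-hand side equals $C_1(f,g)(x_0)\,\ID$, which is \eqref{C1}. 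The delicate point throughout is to verify that the asymptotic identities \eqref{exp Q}, \eqref{exp Q2} allow termwise extraction of kernels at $Z=Z'=0$; this is guaranteed by the polynomial-times-exponential structure of $(\QQ{r}\PP)(\sqrt{p}\,\cdot,\sqrt{p}\,\cdot)$ together with the $O(p^{-\infty})$ remainder from \eqref{defcong}, so that the half-integer powers $p^{-(2k+1)/2}$ at the diagonal must vanish on both sides and the bookkeeping of coefficients is rigorous.
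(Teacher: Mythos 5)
Your overall route coincides with the paper's: \eqref{Q1} and \eqref{Q2} are obtained formally by inserting the expansions \eqref{exp J}, \eqref{exp Q} into the kernel convolutions, Taylor expanding $f_{x_0}$, rescaling, and matching powers of $p^{-1/2}$ via \eqref{def KK}; \eqref{Q3} is the $r=1$ case of \eqref{Q2}; and \eqref{C1} comes from \eqref{Toeplitzfle1} together with $\QQ{0}(h)=h(x_0)\ID$. (The paper itself defers most of this to \citep{MM08b} and only sketches the formal bookkeeping, so your level of detail is appropriate; note also that your $\KK{\QQ{r_1}(f)}{\QQ{r_2}(g)}$ corrects an index typo in the statement of \eqref{Q1}.)

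There is, however, one genuine gap in your derivation of \eqref{Q3}. After applying commutation, the two brackets coming from the triples $(1,0,0)$ and $(0,1,0)$ give $f(x_0)\bigl(\KK{\J{1}}{\J{0}}+\KK{\J{0}}{\J{1}}\bigr)$, and to conclude you need the identity
\begin{equation*}
\KK{\J{0}}{\J{1}}+\KK{\J{1}}{\J{0}}=\J{1}.
\end{equation*}
You attribute the collapse to \eqref{K2}, but \eqref{K2} only says $\KK{\ID}{\ID}=\ID$ and does not yield the displayed identity; indeed neither bracket alone equals $\J{1}$ (by \eqref{J1}, $\KK{\ID}{\J{1}}\PP=-P\OO_1(\LL^0_2)^{-1}P^{\perp}$ and $\KK{\J{1}}{\ID}\PP=-(\LL^0_2)^{-1}P^{\perp}\OO_1P$, each being only ``half'' of $\J{1}\PP$), so the sum identity genuinely needs an argument. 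The paper obtains it by applying \eqref{Q1} to $f=g=1$, i.e.\ from $T_{1,p}=P_p$ and $P_p^2=P_p$, which forces $\J{r}=\sum_{r_1+r_2=r}\KK{\J{r_1}}{\J{r_2}}$ and in particular the case $r=1$ above; alternatively one can verify it directly from \eqref{J1} using $P^{\perp}P=PP^{\perp}=0$. Once this identity is supplied, the rest of your argument goes through.
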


Note that the assertion \eqref{Q1} follows formally from the definitions of $\QQ{r}(f,g)$ and $\QQ{r}(f)$ in \ref{expansion} and the definition of $\KK{.}{.}$ in \eqref{def KK}. In the same way, considering the Taylor expansion of $f_{x_0}$ at $0$, equation \eqref{Q2} follows formally from \ref{offdiag}, \ref{expansion} and the definition of $T_{f,p}$ in \eqref{Berezin-Toeplitz quantization}.
 
As an illustration, let us give the calculation leading to \eqref{Q3}. Notice first that from the identity $T_{f,p}=P_p$ for $f=1$, the definitions of $\J{r}, \QQ{r}$ in \eqref{exp J}, \eqref{exp Q} and \eqref{Q1}, we get
\begin{equation}\label{Q3dem1}
\J{1}=\KK{\J{0}}{\J{1}}+\KK{\J{1}}{\J{0}}.
\end{equation}

But $\J{0}$ and $\J{1}$ commute with any $A\in\End(E_{x_0})$ by \ref{J}. Thus
\begin{equation}\label{Q3dem2}
\begin{split}
\KK{\J{1}}{f(x_0)\J{0}} & =f(x_0)\KK{\J{1}}{\J{0}},\\
 \\
\KK{\J{0}}{f(x_0)\J{1}} & =f(x_0)\KK{\J{0}}{\J{1}}.
\end{split}
\end{equation}

The assertion \eqref{Q3} then follows directly from \eqref{Q3dem1} and \eqref{Q3dem2}.

Finally, \eqref{C1} follows from \eqref{Toeplitzfle1}, and can be found in \citep[(4.82)]{MM08b}. Notice that \eqref{C1} says in particular that the right hand side preserves the degree and vanishes on elements of degree $>0$, a fact which is absolutely not obvious from the formulas \eqref{Q1} and \eqref{Q2}.

Thanks to the known eigenvalues of $(\LL^0_2)^{-1}P^{\perp}$ coming from \ref{spectrumL} and the explicit expression of $\OO_1$ given in \cite[\S 2.2]{MM06}, the terms appearing in \ref{Q} can be computed explicitly. We will only need the following special cases.

\begin{lem}{(\citep[(2.33),\ (2.34)]{MM06})}
\label{L} Let $\<.,.\>$ be the $\C$-bilinear product on $T_{x_0}X\otimes\C$ induced by $g^{TX}$ and let $\nabla^X$ be the connection induced on tensors by the Levi-Civita connection $\nabla^{TX}$. Then the following formulas hold:
%\langle(\nabla^X J)\del{z_l},\del{z_m}\rangle=q_{l,m}

\begin{multline}
\label{L1}
(P\OO_1 (\LL^0_2)^{-1}P^{\perp})(0,Z')=\frac{2\sqrt{-1}}{3} \sum_{i,l,m=1}^nz'_i\langle(\nabla^X_{\frac{\partial}{\partial z_i}} J)\del{z_l},\del{z_m}\rangle\ID\\
 i_{\del{\bar{z}_m}}i_{\del{\bar{z}_l}}\PP(0,Z'),
\end{multline}

\begin{multline}
\label{L2}
(P\OO_1 (\LL^0_2)^{-1}P^{\perp})(Z,0)=\frac{\sqrt{-1}}{3\sum_{i,l,m=1}^n}z_i\langle(\nabla^X_{\frac{\partial}{\partial z_i}} J)\del{z_l},\del{z_m}\rangle\ID\\
i_{\del{\bar{z}_l}}i_{\del{\bar{z}_m}}\PP(Z,0),
\end{multline}

\begin{multline}
\label{L3}
((\LL^0_2)^{-1}P^{\perp}\OO_1P)(Z,0)=-\frac{\sqrt{-1}}{6}\sum_{i,l,m=1}^n\bar{z}_i\langle(\nabla^X_{\frac{\partial}{\partial \bar{z}_i}} J)\del{z_l},\del{z_m}\rangle\\
 d\bar{z}_ld\bar{z}_m\ID\PP(Z,0).
\end{multline}

In the last formula, $d\bar{z}_ld\bar{z}_m$ denotes the wedge product in $\Spin$ by $d\bar{z}_ld\bar{z}_m$.

\end{lem}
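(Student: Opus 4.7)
The plan is to reduce everything to the explicit form of $\OO_1$ taken from \cite[\S 2.2]{MM06}, and then to compute directly using the creation/annihilation operator calculus built from $b_i, b_i^+$ of \eqref{model operator 1}. I would begin by recalling that $\OO_1$ is a first-order differential operator on $\cinf(\R^{2n},(\Spin\otimes E)_{x_0})$ whose coefficients are linear in $Z$ and are built from the values at $x_0$ of $\nabla^X J$, acting scalarly on the $E$-factor. The two essential features of $\OO_1$ for the proof are: its coefficients involve $\<(\nabla^X_{\cdot} J)\del{z_l},\del{z_m}\>$ contracted with the Clifford action (yielding wedges $d\bar z_l d\bar z_m$ or contractions $i_{\del{\bar z_l}}$), and its derivatives enter only through combinations of $b_i, b_i^+$.

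Next, I would use \ref{spectrumL} to describe the action of $(\LL^0_2)^{-1}P^{\perp}$. Since $\Ker(\LL^0_2)=\Ker(\LL)\otimes(\C\otimes E)_{x_0}$, and the nonzero eigenfunctions are of the form $b^\alpha(z^\beta e^{-\pi|z|^2/2})$ with eigenvalue $4\pi|\alpha|$ (plus the $4\pi$ from $\overline w^j i_{\overline w_j}$ on higher Clifford degree), the inverse on $P^{\perp}$ is diagonal in the $(\alpha,$ Clifford degree$)$-decomposition. In particular, I would use the key identity $b_i\PP(Z,Z')=2\pi(\bar z_i-\bar z_i')\PP(Z,Z')$ from \eqref{zbar} and the dual identity for $b_i^+$ acting in $Z'$, so that applying $\OO_1$ to $\PP$ or $P$ produces linear combinations of $(\bar z_i-\bar z_i')\PP$ (first $b$-excited level, eigenvalue $4\pi$) together with terms of Clifford degree $\geq 1$ (eigenvalue $4\pi$ from the second factor). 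Dividing by the corresponding eigenvalues then gives the explicit kernels.

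The first two formulas \eqref{L1}, \eqref{L2} come from the scalar part: write $\OO_1$ as the sum of its terms in $b_i, b_i^+$ with coefficients $\<(\nabla^X_{\del{z_i}}J)\del{z_l},\del{z_m}\>$ (and its conjugate), apply $(\LL^0_2)^{-1}P^\perp$ on the right, which divides the $b_i^+$-excited modes by the appropriate eigenvalue, then project with $P$ on the left, which by \eqref{Pzi=Pzi'bar} replaces $\bar z_i$ by $\bar z_i'$. Evaluation at $Z=0$ or $Z'=0$ kills cross terms and yields the stated expressions with the $2\sqrt{-1}/3$ and $\sqrt{-1}/3$ prefactors (these numerical constants come from the coefficient of $Z$ in the Taylor expansion of $\nabla^X J$ at $x_0$ appearing in $\OO_1$, combined with the factor $1/(4\pi)$ from the eigenvalue). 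For \eqref{L3}, the term with $d\bar z_l d\bar z_m$ comes from the Clifford-degree-raising part of $\OO_1$ together with $P\OO_1 P=0$ from \eqref{POP=0}, and once again inverting $\LL^0_2$ on that component simply divides by $4\pi$.

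The main obstacle, and the reason one would defer to \cite[\S 2.2]{MM06}, is bookkeeping: tracking the exact numerical prefactors (the $1/3$ and $1/6$), sign conventions for $\sqrt{-1}$, and the interaction between the Clifford wedges/contractions and the Hermitian pairing $\<\cdot,\cdot\>$ requires a careful expansion of $D_p^2$ up to order $t$ in the rescaling parameter. Once $\OO_1$ is written out explicitly with all these conventions fixed, formulas \eqref{L1}--\eqref{L3} are then an immediate consequence of applying $P$, $P^\perp$ and $(\LL^0_2)^{-1}$ mode-by-mode to the Gaussian kernel $\PP(Z,Z')$ and evaluating at the base point.
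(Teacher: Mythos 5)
The paper does not prove this lemma at all: it is imported verbatim from \cite[(2.33), (2.34)]{MM06}, so there is no internal argument to compare yours against. Your sketch does correctly identify the machinery that \cite{MM06} uses --- the expansion \eqref{expLt2} of the rescaled Dirac operator, the spectral decomposition of \ref{spectrumL}, the identity \eqref{zbar} to convert $b_i$ into $2\pi(\bar z_i-\bar z_i')$, and the replacement \eqref{Pzi=Pzi'bar} under the projection $P$ --- and the observation that $(\LL^0_2)^{-1}P^\perp$ acts by dividing each excited mode by its eigenvalue is exactly how those kernels are obtained.

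However, as a proof the proposal has a genuine gap: you never write down $\OO_1$. Every quantitative feature of \eqref{L1}--\eqref{L3} --- the constants $2\sqrt{-1}/3$, $\sqrt{-1}/3$, $-\sqrt{-1}/6$, the fact that only $\nabla^X J$ appears (and not, say, the curvature of $E$ or the scalar curvature, which only enter at order $t^2$ through $\OO_2$), and the distinction between the contraction terms $i_{\partial/\partial\bar z_l}i_{\partial/\partial\bar z_m}$ in \eqref{L1}--\eqref{L2} versus the wedge terms $d\bar z_l d\bar z_m$ in \eqref{L3} --- is read off from that explicit expression, and you explicitly defer it back to \cite[\S 2.2]{MM06}, which is the very source the lemma cites. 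So the argument is circular relative to the task of proving the statement. Two smaller imprecisions: the first two formulas do not come from ``the scalar part'' of $\OO_1$ --- the presence of the double contraction means they come from the Clifford-degree-lowering part of $\OO_1$ composed with the restriction of $(\LL^0_2)^{-1}P^\perp$ to degree-$2$ components; and on those components the term $4\pi\overline w^j i_{\overline w_j}$ of $\LL^0_2$ contributes $8\pi$ (twice the degree times $4\pi$... more precisely $4\pi$ times the antiholomorphic degree, i.e.\ $8\pi$ in degree $2$), not $4\pi$, which matters precisely for the prefactors you are trying to recover.
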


\subsection{Calculation of the second coefficient}

Let $f$ and $g\in\cinf(X,\End(E))$ be fixed in all the sequel. In this last part, we will use the results summarized in the previous sections in order to compute the coefficient $C_1(f,g)\in\cinf(X,\End(E))$ defined in \eqref{compo}, thus giving a proof of \ref{goal}. 

Recall that by \ref{K}, the polynomials $\J{1}$ and $\J{2}$ commute with any $A\in\End(E_{x_0})$, thus in particular with $h(x_0)=h_{x_0}(0)$ for any $h\in\cinf(X,\End(E))$. Thus from \eqref{K2}, \eqref{K5}, \eqref{J0} and \eqref{Q2}, we have
\begin{equation} 
\label{CC1}
\QQ{0}(h)=\KK{\ID}{h(x_0)\ID}=h(x_0)\ID.
\end{equation}

Let us develop the terms in the expression of $C_1(f,g)$ given by \eqref{C1}. By \eqref{Q1} and \eqref{CC1}, we get on one hand
\begin{equation}
\label{QQ2(f,g)}
\begin{split}
\QQ{2} & (f,g)=\\
& \KK{f(x_0)\J{0}}{\QQ{2}(g)}+\KK{\QQ{2}(f)}{g(x_0)\J{0}}+\KK{Q_{1,x_0}(f)}{Q_{1,x_0}(g)}.
\end{split}
\end{equation}

On another hand, we get from \ref{K}, \ref{J} and \eqref{Q2} that for any $h\in\cinf(X,\End(E))$, thus in particular for $h=fg$,
\begin{equation}
\label{QQ2(h)}
\begin{split}
& \QQ{2}(h)=h(x_0)\KK{\J{0}}{\J{2}}+\KK{\J{2}}{\J{0}}h(x_0)\\
& +h(x_0)\KK{\J{1}}{\J{1}}+\sum\limits_{j=1}^{2n}\frac{\partial h_{x_0}}{\partial Z_j}(0)\KK{\J{1}}{Z_j \J{0}}\\
& +\sum\limits_{j=1}^{2n}\frac{\partial h_{x_0}}{\partial Z_j}(0)\KK{\J{0}}{Z_j \J{1}}+\sum\limits_{|\alpha|=2}\frac{\partial^2 h_{x_0}}{\partial Z^\alpha}(0)\KK{\J{0}}{\frac{Z^\alpha}{\alpha !} \J{0}}.
\end{split}
\end{equation}

The computation of $C_1(f,g)$ will be done in three steps: we will first use \ref{K} to simplify these expressions everywhere it's possible, then use \ref{J} and formal calculus of operators to cancel most of the terms. Finally, we will use \ref{L} and the kernel calculus of the previous section to handle the terms containing $Z$.

We first develop one by one the terms of \eqref{QQ2(f,g)}. Expanding $\QQ{2}(g)$ inside the expression $\KK{f(x_0)\J{0}}{\QQ{2}(g)}$ using \eqref{QQ2(h)} and simplifying with \ref{K},
\begin{equation}
\label{KKfJ0QQ2g}
\begin{split}
& \KK{f(x_0)\J{0}}{\QQ{2}(g)}=f(x_0)g(x_0)\KK{\J{0}}{\J{2}}\\
& +f(x_0)\KKK{\J{0}}{\J{2}}{\J{0}}g(x_0)+f(x_0)g(x_0)\KKK{\J{0}}{\J{1}}{\J{1}}\\
&+f(x_0)\sum\limits_{j=1}^{2n}\frac{\partial g_{x_0}}{\partial Z_j}(0)\KKK{\J{0}}{\J{1}}{Z_j \J{0}}\\
& +f(x_0)\sum\limits_{j=1}^{2n}\frac{\partial g_{x_0}}{\partial Z_j}(0)\KK{\J{0}}{Z_j\J{1}}+
f(x_0)\sum\limits_{|\alpha|=2}\frac{\partial^2 g_{x_0}}{\partial Z^\alpha}(0)\KK{\J{0}}{\frac{Z^\alpha}{\alpha !}\J{0}}.
\end{split}
\end{equation}

We expand in the same way $\QQ{2}(f)$ inside $\KK{\QQ{2}(f)}{g(x_0)\J{0}}$ using \ref{K} and \eqref{QQ2(h)},

\begin{equation}
\label{KKQQ2fgJ0}
\begin{split}
& \KK{\QQ{2}(f)}{g(x_0)\J{0}}=\KK{\J{2}}{\J{0}}f(x_0)g(x_0)\\
& +f(x_0)\KKk{\J{0}}{\J{2}}{\J{0}}g(x_0)+f(x_0)g(x_0)\KKk{\J{1}}{\J{1}}{\J{0}}\\
& +\sum_{j=1}^{2n}\frac{\partial f_{x_0}}{\partial Z_j}(0)g(x_0)\KKk{\J{1}}{Z_j\J{0}}{\J{0}}\\
& +\sum_{j=1}^{2n}\frac{\partial f_{x_0}}{\partial Z_j}(0)g(x_0)\KKk{\J{0}}{Z_j\J{1}}{\J{0}}\\
& +\sum\limits_{|\alpha|=2}\frac{\partial^2 f_{x_0}}{\partial Z^\alpha}(0)g(x_0)\KKk{\J{0}}{\frac{Z^\alpha}{\alpha !}\J{0}}{\J{0}}.
\end{split}
\end{equation}

We then use \ref{K} and \eqref{Q3} to expand $\QQ{1}(f)$ and $\QQ{1}(g)$ inside the last term of \eqref{QQ2(f,g)},
\begin{equation}
\label{KKQQ1fQQ1g}
\begin{split}
& \KK{\QQ{1}(f)}{\QQ{1}(g)}=f(x_0)g(x_0)\KK{\J{1}}{\J{1}}\\
& +f(x_0)\sum\limits_{j=1}^{2n}\frac{\partial g_{x_0}}{\partial Z_j}(0)\KKK{\J{1}}{\J{0}}{Z_j\J{0}}\\
& +\sum_{j=1}^{2n}\frac{\partial f_{x_0}}{\partial Z_j}(0)g(x_0)\KKk{\J{0}}{Z_j\J{0}}{\J{1}}\\
& +\sum\limits_{i,j=1}^{2n}\KKKK{\J{0}}{\frac{\partial f_{x_0}}{\partial Z_i}(0)Z_i\J{0}}{\J{0}}{\frac{\partial g_{x_0}}{\partial Z_j}(0)Z_j\J{0}}.
\end{split}
\end{equation}

Let us now add together \eqref{KKfJ0QQ2g}, \eqref{KKQQ2fgJ0} and \eqref{KKQQ1fQQ1g} to get \eqref{QQ2(f,g)}. We first point out some useful cancellations: by \eqref{def KK} and \eqref{J1}, we get
\begin{equation}
\label{PO1L2O1P}
\begin{split}
\KKK{\J{0}}{\J{1}}{\J{1}}\PP & =P(\J{1}\PP)(\J{1}\PP)P\\
& =P\OO_1 (\LL^0_2)^{-2}P^{\perp}\OO_1 P,
\end{split}
\end{equation}

\begin{equation}
\KKk{\J{1}}{\J{1}}{\J{0}}\PP=P\OO_1 (\LL^0_2)^{-2}P^{\perp}\OO_1 P.
\end{equation}

On another hand, by \eqref{def KK}, \eqref{J0} and \eqref{J2}, we get
\begin{equation}
\label{-PO1L2O1P}
\begin{split}
\KKK{\J{0}}{\J{2}}{\J{0}}\PP & =\KKk{\J{0}}{\J{2}}{\J{0}}\PP\\
& =P(\J{2}\PP)P\\
& =-P\OO_1 (\LL^0_2)^{-2}P^{\perp}\OO_1 P.
\end{split}
\end{equation}

As the operator on the right hand side of the equations \eqref{PO1L2O1P}-\eqref{-PO1L2O1P} commutes with constant endomorphisms by \ref{J}, equations \eqref{PO1L2O1P}-\eqref{-PO1L2O1P} and \eqref{K5} show that the second and third terms of \eqref{KKfJ0QQ2g} cancel each other, as well as the second and third terms of \eqref{KKQQ2fgJ0}.

Now, in the difference $\QQ{2}(f,g)-\QQ{2}(fg)$, the first three terms of \eqref{QQ2(h)} with $h=fg$ cancel with the first terms of \eqref{KKfJ0QQ2g}, \eqref{KKQQ2fgJ0} and \eqref{KKQQ1fQQ1g} respectively.

Using \eqref{J0} and the cancellations above, we are now ready to describe the terms of $\QQ{2}(f,g)-\QQ{2}(fg)$. Let us define $I_1, I_2, I_3$ and $I_4$, polynomials in $Z,Z'\in\R^{2n}$ with values in $\End(\Spin\otimes E)_{x_0}$, by the following formulas:

\begin{equation}\label{I1}
\begin{split}
& I_1=-\sum\limits_{|\alpha|=2}\frac{\partial^2 (fg)_{x_0}}{\partial Z^\alpha}(0)\KK{\ID}{\frac{Z^\alpha}{\alpha !} \ID}\\
& +f(x_0)\sum\limits_{|\alpha|=2}\frac{\partial^2 g_{x_0}}{\partial Z^\alpha}(0)\KK{\ID}{\frac{Z^\alpha}{\alpha !}\ID}\\
& +\sum\limits_{|\alpha|=2}\frac{\partial^2 f_{x_0}}{\partial Z^\alpha}(0)g(x_0)\KKk{\ID}{\frac{Z^\alpha}{\alpha !}\ID}{\ID}\\
 +\sum\limits_{i,j=1}^{2n} & \KKKK{\ID}{\frac{\partial f_{x_0}}{\partial Z_i}(0)Z_i\ID}{\ID}{\frac{\partial g_{x_0}}{\partial Z_j}(0)Z_j\ID},
\end{split}
\end{equation}

\begin{equation}\label{I2}
\begin{split}
I_2 &=f(x_0)\sum\limits_{j=1}^{2n}\frac{\partial g_{x_0}}{\partial Z_j}(0)\KKK{\ID}{\J{1}}{Z_j \ID}\\
& +\sum_{j=1}^{2n}\frac{\partial f_{x_0}}{\partial Z_j}(0)g(x_0)\KKk{\ID}{Z_j\J{1}}{\ID},
\end{split}
\end{equation}

\begin{equation}\label{I3}
\begin{split}
I_3 &=-\sum\limits_{j=1}^{2n}\frac{\partial (fg)_{x_0}}{\partial Z_j}(0)\KK{\ID}{Z_j \J{1}}\\
& +f(x_0)\sum\limits_{j=1}^{2n}\frac{\partial g_{x_0}}{\partial Z_j}(0)\KK{\ID}{Z_j\J{1}}\\
& +\sum_{j=1}^{2n}\frac{\partial f_{x_0}}{\partial Z_j}(0)g(x_0)\KKk{\ID}{Z_j\ID}{\J{1}},
\end{split}
\end{equation}

\begin{equation}\label{I4}
\begin{split}
I_4 &=-\sum\limits_{j=1}^{2n}\frac{\partial (fg)_{x_0}}{\partial Z_j}(0)\KK{\J{1}}{Z_j \ID}\\
& +\sum_{j=1}^{2n}\frac{\partial f_{x_0}}{\partial Z_j}(0)g(x_0)\KKk{\J{1}}{Z_j\ID}{\ID}\\
& +f(x_0)\sum\limits_{j=1}^{2n}\frac{\partial g_{x_0}}{\partial Z_j}(0)\KKK{\J{1}}{\ID}{Z_j\ID}.
\end{split}
\end{equation}

Then using \eqref{J0} and by \eqref{QQ2(f,g)}-\eqref{I4}, we get
\begin{equation}
\label{QQ2(f,g)-QQ2(fg)}
\QQ{2}(f,g)-\QQ{2}(fg)=I_1+I_2+I_3+I_4.
\end{equation}

Recall that by definition, the terms $I_1, I_2, I_3$ and $I_4$ in \eqref{QQ2(f,g)-QQ2(fg)} are polynomials in $Z, Z'\in\R^{2n}$. Thus by \eqref{C1}, in order to compute $c_1(f,g)$, it suffices to compute the values of $I_1, I_2, I_3$ and $I_4$ at $Z=Z'=0$. We compute those values one by one in the following propositions, using the kernel calculus described in \ref{local model}.

%We refer for this to \cite{MM08b}, and recall in particular that $P(\bar{z}_iz^\alpha\PP(Z,Z'))=P(\bar{z}_i'z^\alpha\PP(Z,Z'))$ for all $i\leq n$ and all $\alpha\in\N^n$.

\begin{prop}\label{fgh}
For all $f, g$ and $h\in\cinf(\R^{2n},\End(E_{x_0}))$, the following formulas hold:
\begin{equation}\label{Zalpha}
\begin{split}
\sum\limits_{|\alpha|=2}\frac{\partial^2 h}{\partial Z^\alpha}(0) & \KKk{\ID}{\frac{Z^\alpha}{\alpha !}\ID}{\ID}(0,0)\\
& =\sum\limits_{|\alpha|=2}\frac{\partial^2 h}{\partial Z^\alpha}(0)\KK{\ID}{\frac{Z^\alpha}{\alpha !} \ID}(0,0)=\frac{1}{\pi}\sum\limits_{i=1}^n\frac{\partial^2 h}{\partial z_i\partial\bar{z}_i}(0)\ID,
\end{split}
\end{equation}
\begin{equation}
\label{zibarzi}
\begin{split}
\sum_{i,j=1}^{2n}\frac{\partial f}{\partial Z_j}(0)\frac{\partial g}{\partial Z_i}(0)\KKKK{\ID}{Z_i\ID}{\ID}{ & Z_j\ID}(0,0)\\
 & =\frac{1}{\pi}\sum_{i=1}^n\frac{\partial f}{\partial \bar{z}_i}(0)\frac{\partial g}{\partial z_i}(0)\ID,
\end{split}
\end{equation}
so that the value at $Z=Z'=0$ of $I_1$ in \eqref{QQ2(f,g)-QQ2(fg)} is given by

\begin{equation}\label{I100}
I_1(0,0)=-\frac{1}{\pi}\sum_{j=1}^{n}\frac{\partial f_{x_0}}{\partial z_j}(0)\frac{\partial g_{x_0}}{\partial \bar{z}_j}(0)\ID.
\end{equation}

\end{prop}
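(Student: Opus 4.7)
The three identities all follow from the kernel-calculus formulas \eqref{kernel calculus} once one rewrites the Taylor expansions in complex coordinates. For \eqref{Zalpha}, I would use $z_i=Z_{2i-1}+\sqrt{-1}Z_{2i}$ to convert the real Taylor sum into
\begin{equation*}
\sum_{|\alpha|=2}\frac{\partial^2 h}{\partial Z^\alpha}(0)\frac{Z^\alpha}{\alpha!}=\sum_{i,j=1}^{n}\Big(\tfrac{1}{2}\frac{\partial^2 h}{\partial z_i\partial z_j}(0)\,z_iz_j+\frac{\partial^2 h}{\partial z_i\partial\bar z_j}(0)\,z_i\bar z_j+\tfrac{1}{2}\frac{\partial^2 h}{\partial\bar z_i\partial\bar z_j}(0)\,\bar z_i\bar z_j\Big),
\end{equation*}
and then apply $\KK{\ID}{\cdot\,\ID}(0,0)$ termwise. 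By \eqref{kernel calculus}, $\KK{\ID}{z_iz_j\ID}=z_iz_j\ID$ and $\KK{\ID}{\bar z_i\bar z_j\ID}=\bar z_i'\bar z_j'\ID$ vanish at $Z=Z'=0$, whereas $\KK{\ID}{z_i\bar z_j\ID}=\tfrac{1}{\pi}\delta_{ij}\ID+\bar z_i'z_j\ID$ contributes $\tfrac{1}{\pi}\delta_{ij}\ID$, yielding the asserted value. The equality with the triple bracket comes from associativity \eqref{K1} together with the elementary identity $\KK{q(Z)\ID}{\ID}=q(Z)\ID$, a direct consequence of \eqref{K4}, which forces $\KKk{\ID}{q(Z)\ID}{\ID}=\KK{\ID}{q(Z)\ID}$ as polynomials whenever $q$ depends only on $Z$.

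For \eqref{zibarzi}, I would first use \eqref{K4}--\eqref{K5} to pull the constant endomorphisms $\partial_{Z_j}f(0)$ and $\partial_{Z_i}g(0)$ out of the kernel brackets; this is legitimate and preserves the noncommutative ordering, since those endomorphisms commute with $\ID$ and with every scalar polynomial. By associativity \eqref{K1}, it then suffices to evaluate $\KK{\KK{\ID}{Z_i\ID}}{\KK{\ID}{Z_j\ID}}(0,0)$. Using the chain-rule identity $\sum_i\partial_{Z_i}h(0)Z_i=\sum_k(\partial_{z_k}h(0)z_k+\partial_{\bar z_k}h(0)\bar z_k)$ to rewrite each inner bracket as a linear combination of the $z_k$ (unprimed) and $\bar z_k'$ (primed) via \eqref{kernel calculus}, the outer bracket expands bilinearly into four families of which only the type $\KK{\bar z_k'\ID}{z_l\ID}$ survives at $Z=Z'=0$, contributing $\tfrac{1}{\pi}\delta_{kl}\ID$. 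The same chain rule then reorganizes the triple sum over $i,j,k$ into the single sum over $k$ appearing in \eqref{zibarzi}.

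Finally, for \eqref{I100}, I would substitute \eqref{Zalpha} (applied with $h=fg,\,f,\,g$) and \eqref{zibarzi} into the explicit formula \eqref{I1} for $I_1$, and invoke the Leibniz rule
\begin{equation*}
\frac{\partial^2(fg)}{\partial z_k\partial\bar z_k}=\frac{\partial^2 f}{\partial z_k\partial\bar z_k}\,g+\frac{\partial f}{\partial z_k}\frac{\partial g}{\partial\bar z_k}+\frac{\partial f}{\partial\bar z_k}\frac{\partial g}{\partial z_k}+f\,\frac{\partial^2 g}{\partial z_k\partial\bar z_k}.
\end{equation*}
The pure second-derivative contributions from the first three summands of $I_1$ then cancel exactly, leaving $-\tfrac{1}{\pi}\sum_k(\partial_{z_k}f\,\partial_{\bar z_k}g+\partial_{\bar z_k}f\,\partial_{z_k}g)\ID$; the fourth summand of $I_1$, evaluated through \eqref{zibarzi}, cancels exactly one of these cross-terms and produces the advertised answer. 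The main technical subtlety I anticipate throughout is keeping track of the order of matrix multiplication when pulling the noncommuting constants $\partial_{Z_\bullet}f(0)$ and $\partial_{Z_\bullet}g(0)$ through the kernel brackets and through the sums over $i,j,k$, but the identities \eqref{K4}--\eqref{K5} are designed precisely for this setting and preserve the $f$-before-$g$ ordering at every step.
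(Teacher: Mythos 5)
Your proposal is correct and follows essentially the same route as the paper: reduce the nested brackets via \eqref{K1}--\eqref{K5}, pass to complex coordinates, read off from the table \eqref{kernel calculus} that only the $\KK{\bar z_i'\ID}{z_j\ID}$-type terms survive at $Z=Z'=0$ with contribution $\tfrac{1}{\pi}\delta_{ij}\ID$, and finish with the Leibniz rule. The only cosmetic difference is in the first equality of \eqref{Zalpha}, where you re-bracket by associativity and simplify the inner $\KK{q(Z)\ID}{\ID}=q(Z)\ID$, while the paper first converts $q(Z)$ to $q(Z')$ via \eqref{K3}; both are valid one-line kernel-calculus manipulations.
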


\begin{proof}
From \eqref{K1}, \eqref{K2} and \eqref{K4}, we get
\begin{equation}
\label{alpha2}
\begin{split}
\sum\limits_{|\alpha|=2}\KKk{\ID}{Z^\alpha \ID}{\ID} & =\sum\limits_{|\alpha|=2}\KKk{(Z')^\alpha\ID}{\ID}{\ID}\\
& =\sum\limits_{|\alpha|=2}\KK{(Z')^\alpha\ID}{\ID}\\
& =\sum\limits_{|\alpha|=2}\KK{\ID}{Z^\alpha\ID},
\end{split}
\end{equation}
which shows the first equality of \eqref{Zalpha}. On another hand, by \ref{K}, \eqref{Z=z+zbar} and \eqref{kernel calculus}, we compute for $h\in\cinf(\R^{2n},\End(E_{x_0}))$,
\begin{equation}
\label{halpha2}
\begin{split}
\sum\limits_{|\alpha|=2} & \frac{\partial^2 h}{\partial Z^\alpha}(0)\KK{\ID}{\frac{Z^\alpha}{\alpha !} \ID} = \frac{1}{\pi}\sum\limits_{i=1}^n\frac{\partial^2 h}{\partial z_i\partial \bar{z}_i}(0)\ID\\
 & +\sum\limits_{i,j=1}^n\left(\frac{\partial^2 h}{\partial z_i\partial z_j}(0)z_iz_j+\frac{\partial^2 h}{\partial \bar{z}_i\partial \bar{z}_j}(0)\bar{z}_i'\bar{z}_j'+\frac{\partial^2 h}{\partial z_i\partial \bar{z}_j}(0)z_i\bar{z}_j'\right)\ID.
\end{split}
\end{equation}

Evaluating \eqref{halpha2} at $Z=Z'=0$ then gives \eqref{Zalpha}. 

By \eqref{Z=z+zbar} and \eqref{kernel calculus}, we get for any $f\in\cinf(X,\End(E))$,
\begin{equation}
\label{dfdzi}
\begin{split}
\KK{\ID}{\sum\limits_{i=1}^{2n}\frac{\partial f_{x_0}}{\partial Z_i}(0)Z_i\ID}=\sum\limits_{i=1}^n\left(\frac{\partial f_{x_0}}{\partial z_i}(0)z_i+\frac{\partial f_{x_0}}{\partial \bar{z}_i}(0)\bar{z}_i'\right)\ID.
\end{split}
\end{equation}

By \eqref{K4}, \eqref{K5}, \eqref{kernel calculus} and \eqref{dfdzi}, in the same way  than in \eqref{halpha2}, we get at $Z=Z'=0$ for all $f,g\in\cinf(X,\End(E))$,
\begin{equation}
\label{Kdfdzidgdzj}
\begin{split}
\sum\limits_{i,j=1}^{2n}\KKKK{\ID}{\frac{\partial f}{\partial Z_i}(0) & Z_i\ID}{\ID}{\frac{\partial g(0)}{\partial Z_j}Z_j\ID}(0,0)\\
& =\sum_{i=1}^n\frac{\partial f}{\partial \bar{z}_i}(0)\frac{\partial g}{\partial z_i}(0)\KK{\bar{z}_i'\ID}{z_i\ID}(0,0)\\
& =\frac{1}{\pi}\sum_{i=1}^n \frac{\partial f}{\partial \bar{z}_i}(0)\frac{\partial g}{\partial z_i}(0)\ID.
\end{split}
\end{equation}

As $\ID$ commutes with any $A\in\End(E_{x_0})$, from \eqref{Kdfdzidgdzj} we get \eqref{zibarzi}.

Finally, by the formula \eqref{I1} for $I_1$, equations \eqref{Zalpha} and \eqref{zibarzi} give 
\begin{equation}\label{I100=L}
\begin{split}
& I_1(0,0)=\\
& \frac{1}{\pi}\sum\limits_{i=1}^{n}\left(-\frac{\partial^2 (fg)_{x_0}}{\partial z_i\partial \bar{z}_i}(0)+f(x_0)\frac{\partial g_{x_0}}{\partial z_i\partial \bar{z}_i}(0)+\frac{\partial^2 f_{x_0}}{\partial z_i\partial \bar{z}_i}(0)g(x_0)+\frac{\partial f_{x_0}}{\partial \bar{z}_i}(0)\frac{\partial g_{x_0}}{\partial z_i}(0)\right)\ID.
\end{split}
\end{equation}

The equality \eqref{I100} then follows immediately from \eqref{I100=L} by Leibniz rule.
\end{proof}

Let us point out that all the terms of $I_2, I_3$ and $I_4$ in \eqref{I1}, \eqref{I2} and \eqref{I3} contain $\J{1}$. We already see from its expression in \eqref{J1} that the computations will involve the explicit expression of $\OO_1$, and we will thus need to use \ref{L}.

\begin{prop}\label{KJ0=KJ1=0}
For any $1\leq i\leq 2n$, the following formulas hold:
\begin{equation}
\label{KJ0J1ZiJ0}
\begin{split}
\KKK{\ID}{\J{1}}{Z_i\ID}(0,0)=0,\\
\KKK{\ID}{Z_i\J{1}}{\ID}(0,0)=0.
\end{split}
\end{equation}
so that 
\begin{equation}\label{I200}
I_2(0,0)=0,
\end{equation}
i.e. the polynomial $I_2$ in \eqref{I2} vanishes at $Z=Z'=0$.

\end{prop}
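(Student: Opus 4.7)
The plan is to translate each kernel-calculus expression into a composition of standard operators via Proposition~\ref{K}, collapse the products using \eqref{J1} together with the orthogonality relations $P\cdot(\LL^0_2)^{-1}P^\perp=0$ and $(\LL^0_2)^{-1}P^\perp\cdot P=0$, and then conclude by a degree argument based on the $\Z$-graduation of $\Spin$ and the explicit formulas of Lemma~\ref{L}.

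For the first identity, associativity \eqref{K1} gives
\[
\KKK{\ID}{\J{1}}{Z_i\ID}\,\PP \;=\; P\cdot(\J{1}\PP)\cdot(Z_i\ID\,\PP).
\]
Using \eqref{J1} together with $P\cdot(\LL^0_2)^{-1}P^\perp=0$, the left half collapses to $P\cdot\J{1}\PP=-P\OO_1(\LL^0_2)^{-1}P^\perp$, whose kernel at $(0,Z'')$ is supplied by \eqref{L1}. Evaluating the composed kernel at $(0,0)$ yields
\[
\int_{\R^{2n}}\bigl(-P\OO_1(\LL^0_2)^{-1}P^\perp\bigr)(0,Z'')\cdot Z''_i\,\ID\,\PP(Z'',0)\,dZ''.
\]
The endomorphism in \eqref{L1} has the form $\ID\circ i_{\partial/\partial\bar z_m}\circ i_{\partial/\partial\bar z_l}$, which annihilates the degree-$0$ subspace $(\C\otimes E)_{x_0}$ of $\Spin\otimes E$; but $(Z_i\ID\PP)(Z'',0)$ acts on $\Spin\otimes E$ with image contained in exactly that degree-$0$ subspace. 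The integrand therefore vanishes pointwise in $Z''$, giving the first identity.

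For the second identity I argue symmetrically from the other side. Writing $Z_i\J{1}\PP$ as the composition of pointwise multiplication by $Z_i$ with $\J{1}\PP$, I use $(\LL^0_2)^{-1}P^\perp\cdot P=0$ and \eqref{J1} to obtain $(\J{1}\PP)\cdot P=-(\LL^0_2)^{-1}P^\perp\OO_1 P$. Composing with $P$ on the left, the kernel at $(0,0)$ becomes
\[
\int_{\R^{2n}}\PP(0,Z''')\,\ID\cdot Z'''_i\cdot\bigl(-(\LL^0_2)^{-1}P^\perp\OO_1 P\bigr)(Z''',0)\,dZ'''.
\]
By \eqref{L3} the inner kernel has the form $d\bar z_l d\bar z_m\,\ID$ multiplied by a scalar; its image therefore lies in form-degree $2$ of $\Spin\otimes E$, which is annihilated by the leftmost $\ID$ coming from $P=\PP\ID$. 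The integrand vanishes pointwise, and the second identity follows. Combining both vanishings with \eqref{I2} immediately yields $I_2(0,0)=0$.

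The only delicate point is the bookkeeping: one must carefully track the order of composition of the endomorphism-valued kernels and the interaction of $\ID$ with the contraction and wedge factors appearing in Lemma~\ref{L}. Once this is set up correctly, no actual computation is required — both vanishings come from a single degree mismatch between the image of one factor and the kernel of the next.
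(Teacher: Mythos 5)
Your proof is correct and follows essentially the same route as the paper: reduce each bracket to a composition of explicit operators via \eqref{K1} and \eqref{J1}, kill one term of $\J{1}\PP$ using $P(\LL^0_2)^{-1}P^{\perp}=0$ (resp. $P^{\perp}P=0$), and conclude from \eqref{L1} and \eqref{L3} that the surviving kernel is annihilated by the adjacent $\ID$ because of the contraction (resp. wedge) factors. The only cosmetic difference is that the paper first moves the $Z_i$ in the second identity to the outer factor via \eqref{K3}, writing $\KKK{\ID}{Z_i\J{1}}{\ID}=\KKK{Z_i'\ID}{\J{1}}{\ID}$, but this leads to the same integral you wrote down.
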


\begin{proof} First, we've got by \eqref{def KK} and \eqref{J1},
\begin{equation}
\label{KJ0J1ZiJ0PP}
\begin{split}
\KKK{\ID}{\J{1}}{Z_i\ID}\PP & =P(\J{1}\PP)(Z_i\ID\PP)\\
& =-P\OO_1 (\LL^0_2)^{-1}P^{\perp}(Z_i\ID\PP).
\end{split}
\end{equation}

With the convention that operators always act on the $Z$ variable, equation \eqref{KJ0J1ZiJ0PP} gives us
\begin{equation}
\label{KJ0J1ZiJ0PPZZ}
\begin{split}
\KKK{\ID}{\J{1} & }{Z_i\ID}(Z,Z')\PP(Z,Z')\\
& =-P\OO_1 (\LL^0_2)^{-1}P^{\perp}(Z_i\ID\PP(Z,Z'))\\
& =-\int_{\R^{2n}} (P\OO_1 (\LL^0_2)^{-1}P^{\perp})(Z,Z'')\ID Z_i''\PP(Z'',Z')dZ''.
\end{split}
\end{equation}

Recall that by \eqref{PP=1}, $\PP(Z,Z')$ commutes with $\ID$. By the definition of $\ID$ in \eqref{ID}, we have $i_{\del{\bar{z}_m}}i_{\del{\bar{z}_l}}\ID=0$, so that \eqref{L1} implies
\begin{equation}\label{PO1L02=0}
(P\OO_1 (\LL^0_2)^{-1}P^{\perp})(0,Z'')\ID=0.
\end{equation}

We thus deduce from \eqref{KJ0J1ZiJ0PPZZ} and \eqref{PO1L02=0} that
\begin{equation}
\label{KJ0J1ZiJ0PP00}
\begin{split}
\KKK{\ID}{\J{1} & }{Z_i\ID}(0,0)\\
& =-\int_{\R^{2n}} (P\OO_1 (\LL^0_2)^{-1}P^{\perp})(0,Z'')\ID Z''_i\PP(Z'',0)dZ''=0.
\end{split}
\end{equation}

Equation \eqref{KJ0J1ZiJ0PP00} is precisely the first equality of \eqref{KJ0J1ZiJ0}.

On another hand, by \eqref{K1} and \eqref{K3},
\begin{equation}
\label{KJ0ZiJ1J0}
\begin{split}
\KKK{\ID}{Z_i\J{1}}{\ID} & =\KK{\ID}{Z_i\KK{\J{1}}{\ID}}\\
 & =\KKK{Z_i'\ID}{\J{1}}{\ID}.
\end{split}
\end{equation}

By \eqref{J1},
\begin{equation}
\label{KJ1J0}
\begin{split}
\KK{\J{1}}{\ID}\PP & =\left(-(\LL^0_2)^{-1}P^{\perp}\OO_1P-P\OO_1 (\LL^0_2)^{-1}P^{\perp}\right)P\\
& =-(\LL^0_2)^{-1}P^{\perp}\OO_1P.
\end{split}
\end{equation}

But $\ID d\bar{z}_ld\bar{z}_m=0$ by \eqref{ID}, so analogous to \eqref{PO1L02=0} and by \eqref{L3}, we get
\begin{equation}\label{L02PO1=0}
\ID((\LL^0_2)^{-1}P^{\perp}\OO_1P)(Z'',0)=0,
\end{equation}
so that by \eqref{KJ1J0} and \eqref{L02PO1=0},
\begin{equation}
\label{KZi'J0J1J0}
\begin{split}
\KKK{Z_i'\ID}{\J{1} & }{\ID}(0,0)=-(Z_i'P(\LL^0_2)^{-1}P^{\perp}\OO_1P)(0,0)\\
& =-\int_{\R^{2n}} Z''_i\PP(0,Z'')\ID((\LL^0_2)^{-1}P^{\perp}\OO_1P)(Z'',0) dZ''=0.
\end{split}
\end{equation}

From \eqref{KJ0ZiJ1J0} and \eqref{KZi'J0J1J0}, we get the second equality of \eqref{KJ0J1ZiJ0}.

Finally, equation \eqref{I200} follows immediately from \eqref{KJ0J1ZiJ0} and the definition of $I_2$ in \eqref{I2}.
\end{proof}

\begin{prop}\label{KJ0+KJ1} 
For any $1\leq i\leq 2n$, the following formulas hold:
\begin{equation}
\label{KJ0ZiJ1}
\begin{split}
\KK{\ID}{Z_i\J{1}}(0,0 ) & =\KKK{\ID}{\ID}{Z_i\J{1}}(0,0)\\
& =\KKk{\ID}{Z_i\ID}{\J{1}}(0,0).
\end{split}
\end{equation}
\begin{equation}
\label{KJ1ZiJ0}
\begin{split}
\KK{\J{1}}{Z_i\ID}(0,0) & =\KKk{\J{1}}{Z_i\ID}{\ID}(0,0)\\
& =\KKK{\J{1}}{\ID}{Z_i\ID}(0,0).
\end{split}
\end{equation}
so that the values at $Z=Z'=0$ of $I_3$ and $I_4$ in \eqref{I3} and \eqref{I4} are respectively
\begin{equation}\label{I300I400}
I_3(0,0)=I_4(0,0)=0,
\end{equation}
i.e. the polynomials $I_3$ and $I_4$ in \eqref{I3} and \eqref{I4} vanish at $Z=Z'=0$.

\end{prop}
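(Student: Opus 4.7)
The plan is to establish the three-way kernel identities \eqref{KJ0ZiJ1} and \eqref{KJ1ZiJ0} and then deduce \eqref{I300I400} by plugging into \eqref{I3} and \eqref{I4} and invoking the Leibniz rule. The first equalities in both displays are immediate polynomial identities, coming from associativity \eqref{K1} combined with $\KK{\ID}{\ID} = \ID$ from \eqref{K2} and $\KK{Z_i\ID}{\ID} = Z_i\ID$ from \eqref{K4}, which collapse the iterated kernels to the single ones. Only the second equalities are non-trivial.

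For these, I would split $Z_i$ into its $z_j$ and $\bar z_j$ parts via \eqref{Z=z+zbar} and handle each case separately. In one of the two cases per display, the identity survives as a polynomial identity: for \eqref{KJ0ZiJ1} with $Z_i = \bar z_j$, the formula $\KK{\ID}{\bar z_j\ID} = \bar z_j'\ID$ from \eqref{kernel calculus} combined with \eqref{K3} makes both sides equal to $\KK{\bar z_j'\ID}{\J{1}}$; for \eqref{KJ1ZiJ0} with $Z_i = z_j$, the formula $\KK{\ID}{z_j\ID} = z_j\ID$ plays the same role. In the remaining two cases, one of the three expressions vanishes trivially at $(0,0)$ because it contains a factor $z_j$ or $\bar z_j'$, and the task reduces to showing the remaining expression also vanishes at $(0,0)$.

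I would carry out these vanishing computations in the spirit of the proof of Proposition \ref{KJ0=KJ1=0}. After rewriting $\KK{\ID}{z_j\J{1}} = \KK{z_j'\ID}{\J{1}}$ and $\KK{\J{1}}{\bar z_j\ID} = \KK{\bar z_j'\J{1}}{\ID}$ via \eqref{K3}, I would expand $\J{1}\PP$ using \eqref{J1}. One of the two summands in $\J{1}\PP$ is killed by the appropriate $\ID$-factor via \eqref{L02PO1=0} or \eqref{PO1L02=0}; the other summand is handled using either \eqref{L2} directly (for $\KK{z_j'\ID}{\J{1}}(0,0)$) or its adjoint counterpart for the kernel at $(0, Z'')$, the latter obtainable from the self-adjointness of $\J{1}\PP$ (for $\KK{\bar z_j'\J{1}}{\ID}(0,0)$). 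In either case the surviving integral reduces to $\int z_j''z_p''\PP(0,Z'')\PP(Z'',0)\,dZ''$ or $\int \bar z_j''\bar z_p''\PP(0,Z'')\PP(Z'',0)\,dZ''$; by \eqref{kernel calculus} these equal $\KK{\ID}{z_jz_p\ID}(0,0)$ and $\KK{\ID}{\bar z_j\bar z_p\ID}(0,0)$, respectively, both of which vanish at the origin since they reduce to $z_jz_p(0)\ID = 0$ and $\bar z_j'\bar z_p'(0)\ID = 0$.

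With the three-way equality at $(0,0)$ in hand, \eqref{I3} and \eqref{I4} evaluated at the origin collapse to a common endomorphism $\KK{\ID}{Z_j\J{1}}(0,0)$, respectively $\KK{\J{1}}{Z_j\ID}(0,0)$, multiplied by the Leibniz combination $-\frac{\partial(fg)_{x_0}}{\partial Z_j}(0) + f(x_0)\frac{\partial g_{x_0}}{\partial Z_j}(0) + \frac{\partial f_{x_0}}{\partial Z_j}(0)g(x_0) = 0$; the factoring is legitimate because \ref{J} guarantees that these kernel values commute with the constant endomorphisms $f(x_0), g(x_0) \in \End(E_{x_0})$. The main technical obstacle is the case $Z_i = \bar z_j$ of \eqref{KJ1ZiJ0}: no structural identity from \ref{L} directly produces the vanishing, so one must invoke the self-adjointness of $\J{1}\PP$ to convert the unknown kernel of $(\LL^0_2)^{-1}P^\perp\OO_1P$ at $(0, Z'')$ into the known one given by \eqref{L2} at $(Z'', 0)$ before the kernel calculus can dispose of the resulting Gaussian integral.
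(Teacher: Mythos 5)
Your argument is correct, and for most of the proposition it coincides with the paper's own proof: the collapse of the iterated brackets via \eqref{K1}, \eqref{K2} and \eqref{K4}, the splitting of $Z_i$ into $z_j$ and $\bar z_j$ parts, the observation that two of the four resulting terms vanish trivially at the origin while the $\bar z_j$ case of \eqref{KJ0ZiJ1} and the $z_j$ case of \eqref{KJ1ZiJ0} are outright polynomial identities, and the final Leibniz cancellation (with commutation justified by \ref{J}) are all exactly the paper's steps. The genuine divergence is in the remaining vanishing statement $\KK{\J{1}}{\bar z_i\ID}(0,0)=0$. The paper never needs the kernel of $(\LL^0_2)^{-1}P^{\perp}\OO_1P$ at $(0,Z'')$: it invokes the reproducing identity \eqref{Pzi=Pzi'bar}, $\PP(\bar z_i\PP)=\bar z_i'\PP$, to replace $\bar z_i$ by $\bar z_i'$ inside $P(\bar z_i\ID\PP)$, after which the term carries an explicit factor $\bar z_i'$ and dies at $Z'=0$ with no further computation. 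You instead reconstruct that kernel as the adjoint of \eqref{L2}, appealing to the self-adjointness of $\J{1}\PP$, and then kill the Gaussian moment $\int\bar z_j''\bar z_p''\PP(0,Z'')\PP(Z'',0)\,dZ''$ via \eqref{kernel calculus}. This does work --- the symmetry $(\J{1}\PP)(Z,Z')^*=(\J{1}\PP)(Z',Z)$ follows from the self-adjointness of the Bergman projection together with \eqref{exp J}, and the moment indeed vanishes --- but it imports an ingredient the paper does not state and forces you to track the adjoints of the contraction operators appearing in \eqref{L2}; the route through \eqref{Pzi=Pzi'bar} is shorter and purely structural. Your evaluation of $\KK{z_i'\ID}{\J{1}}(0,0)=0$ via the explicit moment $\int z_j''z_p''\PP(0,Z'')\PP(Z'',0)\,dZ''$ is just a rephrasing of the paper's use of $\PP(H(z)\PP)=H(z)\PP$ and is fine.
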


\begin{proof}

From \eqref{K1}, \eqref{K2} and \eqref{K3}, we immediately get the first lines of \eqref{KJ0ZiJ1} and \eqref{KJ1ZiJ0}. Next, we show that 
\begin{equation}\label{KJ0ZiJ0J1=KJ0ziJ1=0}
\KKk{\ID}{Z_i\ID}{\J{1}}(0,0)=\KK{\ID}{Z_i\J{1}}(0,0).
\end{equation}

By \eqref{kernel calculus}, remembering that $Z=(z+\bar{z})/2$, we get on one hand
\begin{equation}
\label{Kzizi'J0J1}
\KKk{\ID}{Z_i\ID}{\J{1}}=\frac{1}{2}\KK{(z_i+\bar{z}_i')\ID}{\J{1}}.
\end{equation}

By \eqref{K3} and \eqref{kernel calculus}, we get on another hand
\begin{equation}
\label{Kzi'zi'J0J1}
\KK{\ID}{Z_i\J{1}}=\KK{Z_i'\ID}{\J{1}}=\frac{1}{2}\KK{(z_i'+\bar{z}_i')\ID}
{\J{1}}.
\end{equation}

By \eqref{Kzizi'J0J1} and \eqref{Kzi'zi'J0J1}, to get \eqref{KJ0ZiJ0J1=KJ0ziJ1=0} it suffices to prove the equality
\begin{equation}
\label{KziJ0J100=Kzi'J0J100}
\KK{z_i\ID}{\J{1}}(0,0)=\KK{z_i'\ID}{\J{1}}(0,0)=0.
\end{equation}

At first, by \eqref{K4} we have
\begin{equation}
\label{KziJ0J100}
\KK{z_i\ID}{\J{1}}=z_i\KK{\ID}{\J{1}},
\end{equation}
so that
\begin{equation}\label{KziJ0J100=0}
\KK{z_i\ID}{\J{1}}(0,0)=0.
\end{equation}

Then, by \eqref{kernel calculus} and \eqref{J1},
\begin{equation}
\label{Kzi'J0J1}
\begin{split}
\KK{z_i'\ID}{\J{1}}\PP & =\KK{\ID}{z_i\J{1}}\PP\\
& =-P\left(z_i(\LL^0_2)^{-1}P^{\perp}\OO_1P\right)-P\left(z_iP\OO_1(\LL^0_2)^{-1}P^{\perp}\right).
\end{split}
\end{equation}

As $\ID d\bar{z}_ld\bar{z}_m=0$ by \eqref{ID}, analogous to \eqref{PO1L02=0} and \eqref{L02PO1=0}, by \eqref{L3}, 
\begin{equation}
\ID((\LL^0_2)^{-1}P^{\perp}\OO_1P)(Z,0)=0.
\end{equation}

As $P=\PP\ID$ by \eqref{def P}, we deduce that 
\begin{equation}
\begin{split}\label{PziLPO1P00=0}
P(z_i(\LL^0_2)^{-1}P^{\perp}\OO_1P)(0,0) & =
\int_{\R^{2n}}\PP(0,Z'')z_i''\ID((\LL^0_2)^{-1}P^{\perp}\OO_1P)(Z'',0)dZ''\\
& =0,
\end{split}
\end{equation}
i.e. the kernel of the first term of the last line of \eqref{Kzi'J0J1} cancels at $Z=Z'=0$. On the other hand, by \eqref{L2} we can write
\begin{equation}\label{HP}
(z_iP\OO_1(\LL^0_2)^{-1}P^{\perp})(Z,0)=H(z)\PP(Z,0),
\end{equation}
with $H(z)\in\End(\Spin\otimes E)_{x_0}$ polynomial in $z\in\C^n$. Recall from \eqref{zbeta} that in this case, again with the convention that operators act on the $Z$ variable, we get
\begin{equation}\label{PHP=HP}
\PP(H(z)\PP)(Z,0)=H(z)\PP(Z,0),
\end{equation}
so that by \eqref{def P}, \eqref{K3K4pre}, \eqref{HP} and \eqref{PHP=HP},
\begin{equation}\label{zivanat0}
\begin{split}
P(z_iP\OO_1(\LL^0_2)^{-1}P^{\perp})(Z,0) & =(z_iP\OO_1(\LL^0_2)^{-1}P^{\perp})(Z,0)\\
& =z_i(P\OO_1(\LL^0_2)^{-1}P^{\perp})(Z,0),
\end{split}
\end{equation}
which vanishes at $Z=0$. By \eqref{Kzi'J0J1}, \eqref{PziLPO1P00=0} and \eqref{zivanat0}, we thus get
\begin{equation}
\label{Kzi'J0J1=0}
\KK{z_i'\ID}{\J{1}}(0,0)=0.
\end{equation}

Equation \eqref{Kzi'J0J1=0}, together with \eqref{Kzizi'J0J1}, \eqref{Kzi'zi'J0J1} and \eqref{KziJ0J100=0}, proves \eqref{KJ0ZiJ1}.

Now concerning \eqref{KJ1ZiJ0}, we are left to show that
\begin{equation}\label{cqfdderniertruc}
\KK{\J{1}}{Z_i\ID}(0,0)=\KKK{\J{1}}{\ID}{Z_i\ID}(0,0).
\end{equation}

By \eqref{kernel calculus}  we have
\begin{equation}
\label{KJ1J0ZiJ0}
\KKK{\J{1}}{\ID}{Z_i\ID}=\frac{1}{2}\KK{\J{1}}{(z_i+\bar{z}_i')\ID}.
\end{equation}

To get \eqref{cqfdderniertruc}, it suffices thus to show that
\begin{equation}
\label{KJ1ziJ000}
\KK{\J{1}}{\bar{z}_i\ID}(0,0)=\KK{\J{1}}{\bar{z}_i'\ID}(0,0)=0.
\end{equation}

The equality on the right of \eqref{KJ1ziJ000} comes from \eqref{K4}. On another hand, by \eqref{J0} and \eqref{J1},
\begin{equation}
\label{KJ1ziJ0}
\KK{\J{1}}{\bar{z}_i\ID}\PP=-P\OO_1(\LL^0_2)^{-1}P^{\perp}(\bar{z}_i\ID\PP)-(\LL^0_2)^{-1}P^{\perp}\OO_1P(\bar{z}_i\ID\PP).
\end{equation}

Now by \eqref{PO1L02=0}, once again the kernel of the first term of the left member of \eqref{KJ1ziJ0} vanishes at $Z=Z'=0$. On another hand, by \eqref{Pzi=Pzi'bar},
\begin{equation}
\label{PziJ0P=Pzi'JOP}
P(\bar{z}_i\ID\PP)=P(\bar{z}_i'\ID\PP).
\end{equation}

We can thus replace $\bar{z}_i$ by $\bar{z}_i'$ in the second term of the left member of \eqref{KJ1ziJ0}, and by \eqref{K3K4pre} we get finally
\begin{equation}
\label{kernel PO1LziP}
((\LL^0_2)^{-1}P^{\perp}\OO_1P\bar{z}_i\ID\PP)(Z,Z')=((\LL^0_2)^{-1}P^{\perp}\OO_1P\ID\PP)(Z,Z')\bar{z}_i',
\end{equation}
which is $0$ at $Z=Z'=0$. Thus the kernel of the second term of \eqref{KJ1ziJ0} cancels as well in this case, which means
\begin{equation}
\label{KJ1ziJ0=0}
\KK{\J{1}}{\bar{z}_i\ID}(0,0)=0,
\end{equation}

Finally, \eqref{KJ1ziJ0=0} implies \eqref{KJ1ziJ000}, which together with \eqref{KJ0ZiJ0J1=KJ0ziJ1=0} concludes the proof of \ref{KJ1ZiJ0}.

The definitions of $I_3$ and $I_4$ in \eqref{I3} and \eqref{I3} and equation \eqref{KJ0ZiJ1} and \eqref{KJ1ZiJ0} respectively give 
\begin{equation}
\begin{split}
I_3(0,0) & =\sum\limits_{i=1}^{2n}\left(f(x_0)\frac{\partial g_{x_0}}{\partial Z_i}(0)+\frac{\partial f_{x_0}}{\partial Z_i}(0)g(x_0)-\frac{\partial (fg)_{x_0}}{\partial Z_i}(0)\right)\KK{\ID}{Z_i\J{1}}(0,0),\\
I_4(0,0) &=\sum\limits_{i=1}^{2n}\left(f(x_0)\frac{\partial g_{x_0}}{\partial Z_i}(0)+\frac{\partial f_{x_0}}{\partial Z_i}(0)g(x_0)-\frac{\partial (fg)_{x_0}}{\partial Z_i}(0)\right)\KK{\J{1}}{Z_i\ID}(0,0),
\end{split}
\end{equation}
and those two formulas vanish by Leibniz rule. We thus get \eqref{I300I400}.

\end{proof}

%By \eqref{C1}, we can compute $C_1(f,g)\ID$ taking the kernel at $Z=Z'=0$ of \eqref{QQ2(f,g)-QQ2(fg)}. 

Using \ref{fgh}, \ref{KJ0=KJ1=0} and \ref{KJ0+KJ1},the kernel of \eqref{QQ2(f,g)-QQ2(fg)} at $Z=Z'=0$ simply is
\begin{equation}
\label{C1fg}
\begin{split}
\QQ{2}(f,g)(0,0)-\QQ{2}(fg)(0,0)=-\frac{1}{\pi}\sum_{j=1}^{n}\frac{\partial f_{x_0}}{\partial z_j}(0)\frac{\partial g_{x_0}}{\partial \bar{z}_j}(0)\ID.
\end{split}
\end{equation}

We thus see that $\QQ{2}(f,g)(0,0)-\QQ{2}(fg)(0,0)$ is in fact of the form $C_1(f,g)(x_0)\ID$ with $C_1(f,g)(x_0)\in\End(E_{x_0})$ given by
\begin{equation}
\label{half poisson}
C_1(f,g)(x_0)=-\frac{1}{\pi}\sum_{j=1}^{n}\frac{\partial f}{\partial z_j}(x_0)\frac{\partial g}{\partial \bar{z}_j}(x_0).
\end{equation}

From \eqref{base} and the definition of the pairing $\<.,.\>$ used in \eqref{fleC_1(f,g)}, we can take this equality to the manifold through our trivialization and we finally get
\begin{equation}
\label{final formula}
C_1(f,g)=-\frac{1}{2\pi}\langle\nabla^{1,0}f,\nabla^{0,1}g\rangle.
\end{equation}

This proves \ref{goal}.

%\bibliographystyle{alpha}
%\nocite{*}
%\bibliography{bb}

Université Paris Diderot - Paris 7, Institut de Mathématiques de Jussieu-Paris Rive Gauche, Case 7012,
75205 Paris Cedex 13, France\\
\emph{E-mail adress}: louis.ioos@imj-prg.fr

\end{document}